\newcommand{\ad }{\mathrm{ad}\,}
\newcommand{\fie }{\Bbbk}
\newcommand{\dpu }{\hat{u}}   
\newcommand{\NA }{\mathcal{B}}
\newcommand{\ndN }{\mathbb{N}}
\newcommand{\ndZ }{\mathbb{Z}}
\newcommand{\ndJ }{\mathbb{J}}
\newcommand{\lex }{<_{\mathrm{lex}}}
\newcommand{\ot }{\otimes}
\newcommand{\su }{S}
\newcommand{\tabincell}[2]{\begin{tabular}{@{}#1@{}}#2\end{tabular}}
\newtheorem{theo}{Theorem}[section]
\newtheorem{prop}[theo]{Proposition}
\newtheorem{lemm}[theo]{Lemma}
\newtheorem{coro}[theo]{Corollary}
\theoremstyle{definition}
\newtheorem{defi}[theo]{Definition}
\newtheorem{exam}[theo]{Example}
\newtheorem{rema}[theo]{Remark}
\begin{document}
\title[root multiplicities over arbitrary fields]
{ On some root multiplicities for Nichols algebras of diagonal type over arbitrary fields }


\author{Ying Zheng}
\address{School of mathematical Sciences, East China Normal University, Shanghai 200241, China.}
\email{52150601007@ecnu.cn}
\thanks{The author was supported by China Scholarship Council}

\date{}

\maketitle

\begin{abstract}
   In this paper, our main aim is to determine the multiplicities of a class of roots for Nichols algebra of diagonal type over fields of arbitrary characteristic, which is a generalization of the results on the multiplicities of these roots over fields of characteristic zero obtained by I. Heckenberber and the author.

  \textit{Keywords}: {Nichols algebras, multiplicity, root vector}
\end{abstract}

\section{Introduction}
Motivated by Hopf algebra theory, W. Nichols first introduced the structure of Nichols algebra in 1978 \cite{MR0506406}. The theory of Nichols algebras has interesting applications in various research areas of mathematics. In particular, the Nichols algebras appeared naturally in the theory of pointed Hopf algebra (see for example \cite{AS98}).

Since the root systems and Weyl groupoids of Nichols algebras of diagonal type were introduced \cite{H2006}, which are based on existence of Poincare-Birkhoff-Witt basis \cite{khar}, the Nichols algebras of diagonal type received a remarkable development. One of the biggest open problems in the theory of Nichols algebras of diagonal type is to determine the root systems and multiplicity of a given root. It is well known that for finite-dimensional Nichols algebras of diagonal type their root system is finite and the multiplicity of any root is one \cite{H08}. Roots of the form $m\alpha_1+\alpha_2$ (where $\alpha_1,\alpha_2$ is the standard basis of $\ndZ^2$.) with $m\ge0$ were determined by
M. Rosso in \cite{MR1632802}. Over fields of characteristic zero, I. Heckenberber and the author have determined roots of the form $m\alpha_1+2\alpha_2$ with $m\ge0$ and their multiplicities  in \cite{H2018root}. It is natural and desirable to address the roots of the form $m\alpha_1+2\alpha_2$ with $m\in\ndN_0$ and to determine their multiplicities for arbitrary fields.

The present paper is devote to generalizing the results in \cite{H2018root} to arbitrary fields. For any $m\ge0$, the multiplicity of root $m\alpha_1+2\alpha_2$ over arbitrary fields is determined in this paper.
Let $\fie$ be a field and the characteristic of $\fie$ is $p$, where $p$ is a prime number or zero.
Let $(V,c)$ be a two-dimensional braided  vector space of diagonal type over $\fie$ and let $(q_{ij})_{1\le i,j\le2}$ be the braiding matrix of $V$. Let $T(V)$ and $\NA(V)$ denote the tensor algebra and Nichols algebra of $V$, respectively.
We define some particular elements $L_n$ with some $n\in \ndN_0$, which are some homogenous elements in $T(V)$ (for details see Lemma \ref{le:existence}). These $L_n$ with $n\in\ndN_0$ do not appear in \cite{H2018root}, in fact they can just appear over fields of positive characteristic. Let $(P_k)_{k\in\ndN_0}$ be the homogenous elements in $T(V)$ defined in \cite{H2018root}, which will be recalled in this paper see Definition \ref{de:Pk}. We relate the relations in $\NA(V)$ of degree $m\alpha_1+2\alpha_2$ to these elements $P_k, L_n$ for some $n, k\in \ndN_0$ in Theorem \ref{theo:main}.
Let $q=q_{11}, r=q_{12}q_{21},s=q_{22}$ and let
 $\pi:T(V)\rightarrow \NA(V)$ be the canonical map. We define a subset $\ndJ=\ndJ^p_{q,r,s}$ of $\ndN_0$ to measure the multiplicities of all roots of the form $m\alpha_1+2\alpha_2$. Let $\ndJ_1, \ndJ_2$ be subsets of $\ndJ$ such that $\ndJ=\ndJ_1\sqcup\ndJ_2$, which are defined in Definition \ref{def:J1J2}.  For any $k\in\ndN_0$, define
 $b_k=\prod_{i=0}^{k-1}(1-q^ir)$. Let $U_n$ with $n\in \ndN_0$ be the subspace of $T(V)$ of degree $n\alpha+2\alpha$ in \eqref{eq:Un}.
 Our main results is following.

\begin{theo}$($See Theorem \ref{theo:main}$)$
Let $m\in \ndN_0$ such that $(m)_q^!b_m\ne0$. Then the elements
$$(\ad x_1)^{m-j}(P_j), ~(\ad x_1)^{m-n}(L_n),~ \text{for} ~j\in \ndJ_1 ~\text{and}~ n\in \ndJ_2,$$
form a basis of $\ker(\pi)\cap U_m$.
\end{theo}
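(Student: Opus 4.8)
The plan is to verify the three defining properties of a basis for $\ker(\pi)\cap U_m$ in turn: that each listed element lies in this space, that the elements are linearly independent, and that they span it. The one structural fact I would invoke throughout is that $\ad x_1$ is given by the same braided-commutator formula on $T(V)$ and on $\NA(V)$, so that $\pi\circ(\ad x_1)=(\ad x_1)\circ\pi$ on every homogeneous component. For containment, Definition \ref{de:Pk} provides that the $P_j$ with $j\in\ndJ_1$ and Lemma \ref{le:existence} that the $L_n$ with $n\in\ndJ_2$ already lie in $\ker(\pi)$; the compatibility above then gives $\pi\big((\ad x_1)^{m-j}(P_j)\big)=(\ad x_1)^{m-j}\pi(P_j)=0$, and likewise for the $L_n$. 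Since $P_j$ and $L_n$ have degrees $j\alpha_1+2\alpha_2$ and $n\alpha_1+2\alpha_2$ respectively and each $\ad x_1$ raises the $\alpha_1$-degree by one, degree bookkeeping places all these elements in $U_m$ as defined in \eqref{eq:Un}.

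The hypothesis $(m)_q^!\,b_m\neq0$ enters in the independence step. Writing $\ad x_1$ in the monomial basis $\{x_1^ax_2x_1^bx_2x_1^c\}$ of the homogeneous components, the scalars produced by iterating it are $q$-integers together with partial products $\prod(1-q^ir)$; the hypothesis says precisely that $(i)_q\neq0$ for $i\le m$ and $1-q^ir\neq0$ for $i\le m-1$, which are exactly the factors that could otherwise annihilate an iterated adjoint action. Hence each $(\ad x_1)^{m-k}$ is injective on the line spanned by $P_k$ (resp. $L_k$). To separate the different elements of the list I would fix the lexicographic order $\lex$ on words in $x_1,x_2$ and show that the leading word of $(\ad x_1)^{m-j}(P_j)$ records both the originating level $j$ and its type, and similarly for $L_n$; because $\ndJ=\ndJ_1\sqcup\ndJ_2$ is indexed by distinct integers, these leading words are pairwise distinct, and a triangularity argument yields linear independence.

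Spanning is the step I expect to be the main obstacle. I would argue by induction on $m$ via the $\ad x_1$-string structure of $\ker(\pi)$, studying the restriction $\ad x_1\colon\ker(\pi)\cap U_{m-1}\to\ker(\pi)\cap U_m$, which is injective by the bounds above. It then suffices to identify its cokernel, the genuinely new relations of degree $m\alpha_1+2\alpha_2$. The core claim, which unwinds Definition \ref{def:J1J2}, is that this cokernel is at most one-dimensional and is nonzero exactly when $m\in\ndJ$, being spanned by the class of $P_m$ if $m\in\ndJ_1$ and of $L_m$ if $m\in\ndJ_2$. Granting this, induction gives $\dim\big(\ker(\pi)\cap U_m\big)=|\{k\in\ndJ_1:k\le m\}|+|\{k\in\ndJ_2:k\le m\}|$, which is exactly the number of elements in the list; since the previous step exhibits that many linearly independent elements inside a space of this dimension, they form a basis.

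Establishing the cokernel claim is the technical heart of the argument. I would prove it by applying the coproduct of $\NA(V)$ to a putative new relation and projecting onto its lower bidegree summands, reducing its analysis to data already controlled at degrees below $m$, and then matching the surviving possibilities against the cases defining $\ndJ_1$ and $\ndJ_2$. The positive-characteristic phenomena are precisely what force the $L_n$ to appear alongside the $P_j$, so the delicate point is to show that these, and only these, arise as new primitive relations; this is the place where the proof genuinely departs from the characteristic-zero treatment of \cite{H2018root}.
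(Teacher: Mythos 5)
Your three-step skeleton (membership, independence, spanning by induction on $m$ with a jump of at most one occurring exactly when $m\in\ndJ$) does match the architecture of the paper's proof, but two of your steps have genuine gaps, and the first is fatal as written.

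The leading-word triangularity you propose for linear independence is false. By Lemma \ref{ad pi} (applied with $k=j$ and with $m$ replaced by $m-j$), the coefficient of $\dpu_0\dpu_m$ in $q_{12}^{-(m-j)}(\ad x_1)^{m-j}(P_j)$ equals $\lambda_{(m-j,j)}=\prod_{l=1}^{m-j}(1-q^{j-1+l}r)(j+l)_q=\bigl(b_m/b_j\bigr)\bigl((m)_q^!/(j)_q^!\bigr)$, which is nonzero for \emph{every} $j$ precisely because $(m)_q^!b_m\ne0$. Since the lex-largest word of degree $m\alpha_1+2\alpha_2$, namely $x_2x_2x_1^m$, occurs only in the expansion of $u_0u_m$ (and does so with nonzero coefficient), all the elements $(\ad x_1)^{m-j}(P_j)$, $j\in\ndJ_1\cap[0,m]$, share the same leading word $x_2x_2x_1^m$: the leading word records nothing about $j$, so no triangularity is available. (The elements $(\ad x_1)^{m-n}(L_n)$ have vanishing $\dpu_0\dpu_m$-component, since $L_n\in U_n'$ has no $\dpu_0\dpu_n$- or $\dpu_n\dpu_0$-term and $\ad x_1(u_au_b)=u_{a+1}u_b+q^aq_{12}u_au_{b+1}$, so they cannot be separated from one another this way either.) Note also that injectivity of $\ad x_1$ on $U_k$ (Lemma \ref{le:adxinjective}) holds unconditionally, so the hypothesis $(m)_q^!b_m\ne0$ does not play the role you assign to it. The paper's independence proof (Proposition \ref{prop:independ}) is instead an induction on $m$: apply $d_1$ to a vanishing combination and use Lemma \ref{le:d1ad}; the structural input missing from your sketch is that $-q_{21}^{-1}d_1(L_n)=(\ad x_1)^{n-j_n-1}(P_{j_n})$, so $d_1$ converts $L$-terms into $P$-terms attached to $j_n\in\ndJ_1$, while the scalars $(m-n)_q(1-q^{m+n-1}r^2)$ multiplying the surviving terms are nonzero by Lemmas \ref{le:J} and \ref{le:unique2}.

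For spanning you isolate the correct claim, but your proof of it --- apply the coproduct, project to lower bidegrees, and ``match the surviving possibilities against the cases defining $\ndJ_1$ and $\ndJ_2$'' --- is a restatement of what must be proven, and it is exactly here that the positive-characteristic content sits. The paper's mechanism is concrete: since $(m)_q^!b_m\ne0$, one has $\ker(d_1|U_m)=\fie P_m$, so a jump in dimension forces $P_m\in\ker(\pi)$, i.e.\ $q^{m(m-1)/2}(-r)^ms=-1$ by Lemma \ref{d(Pk)}, and the jump is exactly one; then $d_1$ maps $\ker(\pi)\cap U_m$ \emph{onto} $\ker(\pi)\cap U_{m-1}$, so each $(\ad x_1)^{m-1-j}(P_j)$, $j\in\ndJ_1\cap[0,m-1]$, has a preimage that can be normalized (by subtracting a multiple of $P_m$) to lie in $U_m'$; Lemma \ref{le:411} then forces $Q_2^{j,m-j-1}=0$; finally Lemma \ref{le:J2}(1) yields $m\in\ndJ_1$ when $q^{m+j-1}r^2\ne1$ for all such $j$, and otherwise Lemma \ref{le:Q2vanish} --- the lemma translating vanishing of $Q_2^{j,m-j-1}$ in characteristic $p$ into the divisibility conditions defining $\ndJ_2$ --- yields $m\in\ndJ_2$. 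Without these ingredients the induction cannot close; moreover the converse direction (the jump \emph{does} occur whenever $m\in\ndJ$) is again Proposition \ref{prop:independ}, so it too is undermined by your first gap.
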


The paper is organized as follows. In Section \ref{se:basic}, we recall some basic notions and notations from \cite{H2018root}, which will be useful later. Some results from \cite{H2018root} are also recalled in this section.
In Section \ref{se:multi}, we illustrate and prove our main theorem mentioned above.

The paper was written during the visit of the author to Marburg University
supported by China Scholarship Council. The author is very grateful to professor Istv\'{a}n
Heckenberger for suggesting this question and for valuable comments on the draft of this paper.
The author also thank Eric Heymann-Heidelberger for useful discussions on the topic.

\section{Preliminaries}
\label{se:basic}
In this section we recall some notations and notions from \cite{H2018root} and also some results, which will be used later.

Let $\fie$ be a field and $\fie^{\times}=\fie\backslash\{0\}$. As usual we write $\ndN$ for the set of positive integers and write $\ndZ$ for the set of integers. Let $\ndN_0=\ndN\cup\{0\}$.

We start collecting some information about root vector from \cite{H2018root}.

Let $(V,c)$ be a braided vector space of rank n. Let $I=\{1,2,\ldots,n\}$, and $x_1,x_2,\ldots,x_n$ be a basis of $V$, let
$(q_{ij})_{i,j\in I} \in (\fie^{\times})^{n\times n}$ such that
$$c(x_i\ot x_j)=q_{ij}x_j\ot x_i, ~\text{for any}~i,j\in I.$$

Let $\alpha_1,\alpha_2,\ldots,\alpha_n$ be the standard basis of $\ndZ^n$ and let
$\chi: \ndZ^n \times \ndZ^n\rightarrow \fie^{\times}$ denote the bicharacter of $\ndZ^n$ given by
$\chi(\alpha_i,\alpha_j)=q_{ij}$, for any $i,j\in I$. Let $T(V)$ be the tensor algebra of $V$ and let $\NA(V)$ be the Nichols algebra of $V$. We write $\pi:T(V)\rightarrow \NA(V)$ for the canonical map.

Let $X=\{x_1,x_2,\ldots,x_n\}$. Let us fix the total ordering $<$ on $X$ such that $x_i<x_j$ if and only if
$1\le i<j\le n$. Let $\mathbb{X}$ and $\mathbb{X}^{\times}$ denote the set of words and non-empty words with letters in $X$, respectively. We write $\lex$ for the
lexicographic ordering which is induced by the total ordering $<$ on $X$ (for detail see \cite[Section~3]{H2018root}).
For any word
$w=x_{i_1}x_{i_2}\cdots x_{i_s}\in \mathbb{X}$, we write $|w|=s$ for the length of $w$.

Both $T(V)$ and $\NA(V)$ have a unique $\ndZ^n$-graded braided bialgebra structure such that $\deg(x_i)=\alpha_i$, for any $i\in I$. Specially, for any word $w=x_{j_1}x_{j_2}\cdots x_{j_t}\in \mathbb{X}$, $\deg(w)=\sum_{i=1}^t\alpha_{j_i}$. We write $\deg(x)$ for the degree of any homogeneous element $x$ of $T(V)$ or $\NA(V)$.

For basic information of Lyndon words we refer to \cite{H2018root} (or see \cite{ML97}).

For a Lyndon word $w\in \mathbb{X}^{\times}$, we define \textbf{super-letter} $[w]\in T(V)$ inductively given by:

\begin{enumerate}
 \item $[w]=w$, if $w\in X$, and
\item $[w]=[u][v]-\chi(\deg(u),\deg(v))[v][u]$
  if $|w|\ge 2$, and  $w=uv$ is the Shirshow
  decomposition of $w$.
\end{enumerate}

For any Lyndon word $w$ and any integer $k\ge2$ we write $[w^k]=[w]^k$.

We also write $\lex$ for the total ordering on the set of super-letters, which is induced by the total ordering $\lex$ on $\mathbb{X}$.

For any $\alpha \in \ndZ^n$, let $o_\alpha \in \ndN \cup \{\infty\}$
be the multiplicative order of $\chi(\alpha ,\alpha )\in \fie^\times $.
Moreover, let
\begin{align*}
	O_\alpha =\begin{cases}
		\{1,o_\alpha,\infty \} & \text{if $o_\alpha=\infty $ or
			$\mathrm{char}(\fie)=0$,}\\
		\{1,o_\alpha p^k,\infty \mid k\in \ndN_0 \} &
		\text{if $o_\alpha <\infty $, $p=\mathrm{char}(\fie )>0$.}
	\end{cases}
\end{align*}

Let $w\in\mathbb X^\times $. We say that $[w]$ is a
\textbf{root vector candidate} if
$w=v^k$ for some Lyndon word $v$
and $k\in O_{\deg v}\setminus \{\infty \}$.

A root vector candidate $[w]$, where $w\in \mathbb X^\times $,
is termed a \textbf{root vector} (of $\NA (V)$)
if $[w]\in \NA (V)$ is not a
linear combination of elements of the form
$[v_k]^{m_k}\cdots [v_1]^{m_1}$, where $k\in \ndN_0$ and
$[v_1],\dots,[v_k]$ are root vector candidates with
$w\lex v_1\lex \cdots \lex v_k$.

Let $L\subseteq \mathbb{X}^{\times}$ such that $w\in L$ whenever $[w]$ is a root vector.

Let
$$\boldsymbol{\Delta}_+ = \{\deg (u)\mid u\in L\}$$
be the set of \textbf{positive roots} of $\NA (V)$ and the \textbf{root system}
$\boldsymbol{\Delta}=\boldsymbol{\Delta}_+ \cup -\boldsymbol{\Delta}_+$ of
$\NA(V)$.
For any $\alpha \in \boldsymbol{\Delta}_+$, the number of elements $u\in L$
such that $\deg(u)=\alpha$ is called the \textbf{multiplicity} of $\alpha $.

Next we recall our fundamental tools in this paper.

For any $i\in I$, let $d_i$ be the skew-derivation of the tensor algebra $T(V)$ given by
$$d_i(x_j)=\delta_{ij},~d_i(xy)=d_i(x)y+\chi(\deg(x),\alpha_i)xd_i(y),$$
for any $j\in I$ and $x,y\in T(V)$.

\begin{rema}
For any homogeneous element $y\in \NA(V)$ of positive degree is zero if and only if $d_i(y)=0$ in $\NA(V)$ for any $i\in I$.
\end{rema}

Let $\ad$ denote the adjoint action of $T(V)$ on itself. Then
$$\ad x_1(y)=x_1y-\chi(\alpha_1,\deg(y))yx_1,~ \text{for any} ~y\in T(V).$$

In the remaining part of this paper we concentrate on the case $n=2$. For the convenience of expressing, let $q=q_{11}, r=q_{12}q_{21}, s=q_{22}$.

For any $k\in\ndN_0$, we define $u_k\in T(V)$ inductively as follows:
  $$u_0=x_2,~~u_k=x_1u_{k-1}-q^{k-1}q_{12}u_{k-1}x_1, ~~\text{for}~~ k\ge1.$$
Note that $u_k=[x_1^kx_2]$.

For any $k\in\ndN_0$, let
  $$b_k=\prod_{j=0}^{k-1}(1-q^jr),$$
and we normalize $u_k$ as follows:
\begin{align*}
  \dpu_k=\begin{cases} \frac{1}{(k)_q^!b_k}u_k & \text{if $(k)_q^!b_k\ne 0$,}\\
	0 & \text{otherwise.}
\end{cases}
\end{align*}

We end this part by recalling some results and some important polynomials in $T(V)$ from \cite{H2018root}.



\begin{lemm}\cite[Lemma~3.9]{H2018root} \label{le:d1ad}
  Let $m\in \ndN_0$. Then
\begin{align*}
  d_1\big( (\ad x_1)^m(y)\big)
  =&\,q^m(\ad x_1)^m(d_1(y))\\
  &\,+(m)_q(1-q^{m-1}\chi(\alpha_1,\alpha)\chi(\alpha,\alpha_1))
  (\ad x_1)^{m-1}(y)
\end{align*}
for any homogeneous element $y\in T(V)$ of degree $\alpha \in \ndN_0^n$.
\end{lemm}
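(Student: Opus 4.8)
The plan is to prove the identity by induction on $m$, using as the engine the commutation of $d_1$ past a single copy of $\ad x_1$. The base case $m=0$ is immediate: both sides reduce to $d_1(y)$, since $(\ad x_1)^0=\id$ and the second term vanishes because $(0)_q=0$ (so no meaning need be assigned to the formal $(\ad x_1)^{-1}$ appearing there). For the inductive step I would peel off the outermost factor, writing $(\ad x_1)^m(y)=(\ad x_1)(z)$ with $z=(\ad x_1)^{m-1}(y)$, a homogeneous element of degree $\beta=\alpha+(m-1)\alpha_1$.

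First I would establish the single-step formula for an arbitrary homogeneous $z$ of degree $\beta$, namely
\begin{align*}
  d_1\big((\ad x_1)(z)\big)=q\,(\ad x_1)(d_1(z))+\big(1-\chi(\alpha_1,\beta)\chi(\beta,\alpha_1)\big)z.
\end{align*}
This follows by a direct computation: expanding $(\ad x_1)(z)=x_1z-\chi(\alpha_1,\beta)zx_1$ and applying the skew-derivation Leibniz rule together with $d_1(x_1)=1$ and $\chi(\alpha_1,\alpha_1)=q$ gives
\begin{align*}
  d_1\big((\ad x_1)(z)\big)=z+q\,x_1d_1(z)-\chi(\alpha_1,\beta)d_1(z)x_1-\chi(\alpha_1,\beta)\chi(\beta,\alpha_1)z.
\end{align*}
Since $\deg(d_1(z))=\beta-\alpha_1$, one has $q\,(\ad x_1)(d_1(z))=q\,x_1d_1(z)-\chi(\alpha_1,\beta)d_1(z)x_1$, which identifies the two middle terms and yields the displayed formula.

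Next I would apply this with $\beta=\alpha+(m-1)\alpha_1$, so that $\chi(\alpha_1,\beta)\chi(\beta,\alpha_1)=q^{2(m-1)}\chi(\alpha_1,\alpha)\chi(\alpha,\alpha_1)$, and substitute the induction hypothesis for $d_1(z)=d_1\big((\ad x_1)^{m-1}(y)\big)$. The $q^m(\ad x_1)^m(d_1(y))$ term appears immediately, and after collecting the coefficient of $(\ad x_1)^{m-1}(y)$ and writing $t=\chi(\alpha_1,\alpha)\chi(\alpha,\alpha_1)$, the claim reduces to the $q$-integer identity
\begin{align*}
  q\,(m-1)_q\big(1-q^{m-2}t\big)+\big(1-q^{2(m-1)}t\big)=(m)_q\big(1-q^{m-1}t\big).
\end{align*}

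The hard part is really only this last bookkeeping, and it is routine: separating the $t$-free part gives $q\,(m-1)_q+1=(m)_q$, while the coefficient of $-t$ telescopes as $q^{m-1}(m-1)_q+q^{2(m-1)}=q^{m-1}(m)_q$. Both are standard $q$-integer telescopings, so no genuine obstacle arises; the only care needed is in tracking the powers of $\chi(\alpha_1,\alpha_1)=q$ that enter from the degree shift by $(m-1)\alpha_1$ and from $\deg(d_1(z))=\beta-\alpha_1$.
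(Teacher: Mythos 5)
Your proof is correct: the base case, the single-step formula $d_1\big((\ad x_1)(z)\big)=q\,(\ad x_1)(d_1(z))+\big(1-\chi(\alpha_1,\beta)\chi(\beta,\alpha_1)\big)z$, the degree bookkeeping $\chi(\alpha_1,\beta)\chi(\beta,\alpha_1)=q^{2(m-1)}\chi(\alpha_1,\alpha)\chi(\alpha,\alpha_1)$, and the closing $q$-integer identity $q\,(m-1)_q\big(1-q^{m-2}t\big)+\big(1-q^{2(m-1)}t\big)=(m)_q\big(1-q^{m-1}t\big)$ all check out. The present paper gives no proof of this lemma, only the citation to \cite[Lemma~3.9]{H2018root}, and the proof there is the same induction on $m$ via peeling off one $\ad x_1$, so your argument is essentially the standard one.
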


\begin{lemm}\cite[Lemma~3.11]{H2018root} \label{le:dZ}
	Let $k\in \ndN_0$ such that $(k)_q^!b_k\ne 0$ and let
	$\lambda_0,\dots,\lambda_k\in \fie $. Let
	$Z=\sum_{i=0}^k\lambda_i(-q_{21})^i\dpu_i\dpu_{k-i}$ in $T(V)$. Then
	\begin{align*}
	  d_1(Z)=&-q_{21}\sum_{i=0}^{k-1}(\lambda_{i+1}-q^i\lambda_i)(-q_{21})^i
	  \dpu_i\dpu_{k-1-i},\\
		d_2(Z)=&(\lambda_0+(-r)^ks\lambda_k)\dpu_k.
	\end{align*}
\end{lemm}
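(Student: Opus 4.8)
The plan is to reduce both formulas to two elementary ``building-block'' identities describing how $d_1$ and $d_2$ act on the normalized elements $\dpu_i$, and then to expand $d_1(Z)$ and $d_2(Z)$ by the twisted Leibniz rule and reindex the resulting sums. Before anything else I would observe that the hypothesis $(k)_q^!b_k\neq 0$ forces $(i)_q^!b_i\neq 0$ for every $0\le i\le k$, since $(k)_q^!=\prod_{i=1}^k(i)_q$ and $b_k=\prod_{j=0}^{k-1}(1-q^jr)$ are products of exactly these factors; hence every $\dpu_i$ occurring in $Z$ (namely $i\in\{0,\dots,k\}$) is given by the nonzero branch $\dpu_i=\frac{1}{(i)_q^!b_i}u_i$, so the identities below apply to all of them.

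The two building blocks are $d_1(\dpu_i)=\dpu_{i-1}$ for $1\le i\le k$ (with the convention $\dpu_{-1}=0$, so that $d_1(\dpu_0)=0$) and $d_2(\dpu_i)=\delta_{i,0}$. For the first, since $u_i=(\ad x_1)^i(x_2)$ I would apply Lemma \ref{le:d1ad} with $y=x_2$, $\alpha=\alpha_2$ and $m=i$: the first term vanishes because $d_1(x_2)=0$, and using $\chi(\alpha_1,\alpha_2)\chi(\alpha_2,\alpha_1)=r$ one gets $d_1(u_i)=(i)_q(1-q^{i-1}r)u_{i-1}$. Dividing by $(i)_q^!b_i$ and using $(i)_q^!=(i)_q(i-1)_q^!$ and $b_i=(1-q^{i-1}r)b_{i-1}$ yields exactly $d_1(\dpu_i)=\dpu_{i-1}$. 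For the second, I would induct on the defining recursion $u_i=x_1u_{i-1}-q^{i-1}q_{12}u_{i-1}x_1$: applying $d_2$, and using $d_2(x_1)=0$ together with $\chi(\alpha_1,\alpha_2)=q_{12}$, gives $d_2(u_i)=q_{12}x_1d_2(u_{i-1})-q^{i-1}q_{12}d_2(u_{i-1})x_1$. For $i=1$ this equals $q_{12}x_1-q_{12}x_1=0$ since $d_2(u_0)=1$, and for $i\ge2$ it vanishes because $d_2(u_{i-1})=0$ by induction; thus $d_2(\dpu_i)=0$ for $i\ge1$ while $d_2(\dpu_0)=d_2(x_2)=1$.

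With these in hand, and recording $\deg\dpu_i=i\alpha_1+\alpha_2$ so that $\chi(\deg\dpu_i,\alpha_1)=q^iq_{21}$ and $\chi(\deg\dpu_i,\alpha_2)=q_{12}^is$, the twisted Leibniz rule gives $d_1(\dpu_i\dpu_{k-i})=\dpu_{i-1}\dpu_{k-i}+q^iq_{21}\dpu_i\dpu_{k-1-i}$ and $d_2(\dpu_i\dpu_{k-i})=\delta_{i,0}\dpu_k+q_{12}^is\,\delta_{i,k}\dpu_k$. For $d_1(Z)$ I would substitute, drop the $i=0$ term of the first sum and the $i=k$ term of the second (both kill a factor $\dpu_{-1}=0$), then shift $i\mapsto j+1$ in the first sum so that both run over $0\le j\le k-1$; collecting the coefficient of $\dpu_j\dpu_{k-1-j}$ and using $q^jq_{21}(-q_{21})^j=-q^j(-q_{21})^{j+1}$ identifies it as $(\lambda_{j+1}-q^j\lambda_j)(-q_{21})^{j+1}=-q_{21}(\lambda_{j+1}-q^j\lambda_j)(-q_{21})^j$, which is the stated formula. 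For $d_2(Z)$ only the indices $i=0$ and $i=k$ survive, contributing $\lambda_0\dpu_k$ and $\lambda_k(-q_{21})^kq_{12}^ks\dpu_k$ respectively; since $(-q_{21})^kq_{12}^k=(-1)^k(q_{12}q_{21})^k=(-r)^k$, this sums to $(\lambda_0+(-r)^ks\lambda_k)\dpu_k$.

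I do not expect a genuine conceptual obstacle here; the argument is entirely bookkeeping once the two building blocks are established. The most error-prone points, and thus where I would be most careful, are the boundary terms---one must use $\dpu_{-1}=0$ to discard the endpoint contributions \emph{before} reindexing, lest the two shifted sums fail to align over the common range $0\le j\le k-1$---and the sign-and-power manipulations needed to match the statement verbatim, in particular factoring out $-q_{21}$ in $d_1(Z)$ and the collapse $(-q_{21})^kq_{12}^k=(-r)^k$ in $d_2(Z)$.
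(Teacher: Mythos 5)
Your proof is correct, and it follows essentially the same route as the cited source (this paper states the lemma by reference to \cite[Lemma~3.11]{H2018root} without reproducing a proof): first establish $d_1(\dpu_i)=\dpu_{i-1}$ via $d_1(u_i)=(i)_q(1-q^{i-1}r)u_{i-1}$ and $d_2(\dpu_i)=\delta_{i,0}$, then expand by the twisted Leibniz rule and reindex, with the hypothesis $(k)_q^!b_k\ne 0$ correctly invoked to ensure every $\dpu_i$ with $i\le k$ lies in the nonzero normalization branch. The boundary-term bookkeeping ($\dpu_{-1}=0$) and the sign collapses $q_{21}(-q_{21})^j=-(-q_{21})^{j+1}$ and $(-q_{21})^kq_{12}^k=(-r)^k$ are all handled correctly.
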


For any $n\in \ndN _0$ let
\begin{align} \label{eq:Un}
 U_n=&\,\bigoplus_{i=0}^n \fie u_iu_{n-i}\subseteq T(V),&
 U_n'=&\,\bigoplus_{i=0}^{n-1} \fie u_iu_{n-i}\subseteq T(V).
\end{align}

\begin{lemm}\cite[Lemma~4.4]{H2018root}
\label{le:adxinjective}
  The map $\ad x_1:U_m\to U_{m+1}$
  is injective for any $m\in \ndN_0$.
\end{lemm}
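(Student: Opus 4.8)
The plan is to compute $\ad x_1$ explicitly on the spanning family $(u_iu_{m-i})_{0\le i\le m}$ of $U_m$ and then read off injectivity from the resulting triangular coefficient pattern. First I would record two elementary facts about $\ad x_1$. From the definition $\ad x_1(y)=x_1y-\chi(\alpha_1,\deg y)\,yx_1$, inserting $\pm\chi(\alpha_1,\deg y)\,y\,x_1\,z$ shows that $\ad x_1$ is a skew derivation:
\begin{align*}
  \ad x_1(yz)=\ad x_1(y)\,z+\chi(\alpha_1,\deg y)\,y\,\ad x_1(z)
\end{align*}
for homogeneous $y,z\in T(V)$. Second, since $\deg u_k=k\alpha_1+\alpha_2$ gives $\chi(\alpha_1,\deg u_k)=q^kq_{12}$, the recursion $u_{k+1}=x_1u_k-q^kq_{12}u_kx_1$ says precisely that $\ad x_1(u_k)=u_{k+1}$.

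Combining these with $y=u_i$ and $z=u_{m-i}$ yields the key formula
\begin{align*}
  \ad x_1(u_iu_{m-i})=u_{i+1}u_{m-i}+q^iq_{12}\,u_iu_{m+1-i}
\end{align*}
for $0\le i\le m$. Writing $u_{i+1}u_{m-i}=u_{i+1}u_{(m+1)-(i+1)}$ and $u_iu_{m+1-i}=u_iu_{(m+1)-i}$, both summands are members of the spanning family of $U_{m+1}$; in particular $\ad x_1(U_m)\subseteq U_{m+1}$, and with respect to these families $\ad x_1$ is represented by a lower bidiagonal matrix whose $i$-th column has entry $q^iq_{12}$ in row $i$ and $1$ in row $i+1$.

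To finish, I would take $y=\sum_{i=0}^m c_iu_iu_{m-i}$ with $\ad x_1(y)=0$ and compare coefficients in $U_{m+1}$. Using that the sum defining $U_{m+1}$ in \eqref{eq:Un} is direct, the coefficient of $u_0u_{m+1}$ equals $c_0q_{12}$, the coefficient of $u_{m+1}u_0$ equals $c_m$, and for $1\le j\le m$ the coefficient of $u_ju_{m+1-j}$ equals $c_{j-1}+q^jq_{12}c_j$. Since $q_{12}\ne0$ the first equation gives $c_0=0$, and the recursion $c_{j-1}+q^jq_{12}c_j=0$ with $q,q_{12}\in\fie^{\times}$ propagates this upward to force $c_1=\dots=c_m=0$ (the relation $c_m=0$ serving as a consistency check). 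Thus $y=0$ and $\ad x_1$ is injective.

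The one point that must be handled with care is the linear independence invoked in the last step, i.e.\ that the family $(u_ju_{m+1-j})_{0\le j\le m+1}$ really is a basis of $U_{m+1}$, as the direct-sum notation in \eqref{eq:Un} asserts. I would justify this by a leading-word argument: each super-letter $u_k=[x_1^kx_2]$ has leading word $x_1^kx_2$ with respect to $\lex$, so the products $u_ju_{m+1-j}$ have pairwise distinct leading words $x_1^jx_2x_1^{m+1-j}x_2$ and are therefore linearly independent in $T(V)$. Beyond this, the argument is routine linear algebra, so I anticipate no serious obstacle.
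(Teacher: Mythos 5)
Your proof is correct, and it is essentially the argument behind the cited result: the paper quotes this lemma from \cite[Lemma~4.4]{H2018root}, whose proof likewise computes $\ad x_1(u_iu_{m-i})=u_{i+1}u_{m-i}+q^iq_{12}u_iu_{m-i+1}$ from the skew-derivation property and $\ad x_1(u_k)=u_{k+1}$, and reads off injectivity from the resulting bidiagonal coefficient pattern with nonzero entries $q^iq_{12}$. Your extra care in justifying the directness of the sum in \eqref{eq:Un} via the distinct leading words $x_1^jx_2x_1^{m+1-j}x_2$ is sound and matches the standard Lyndon-word reasoning underlying that definition.
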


\begin{defi}\cite[Definition~4.6]{H2018root} \label{de:Pk}
For all $k\in \ndN_0$ with $(k)_q^!b_k\ne 0$, let
\begin{align*} 
 P_k=\sum_{i=0}^k (-q_{21})^iq^{i(i-1)/2}\dpu_i\dpu_{k-i}\in T(V).
\end{align*}
\end{defi}

\begin{lemm}\cite[Lemma~4.7]{H2018root} \label{d(Pk)}
Let $k\in \ndN_0$ with $(k)_q^!b_k\ne 0$. Then $P_k=0$ in $\NA (V)$ if and only if
$q^{k(k-1)/2}(-r)^ks=-1$.
\end{lemm}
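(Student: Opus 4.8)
The plan is to apply the criterion in the Remark preceding the statement: a homogeneous element of positive degree vanishes in $\NA(V)$ if and only if all of its skew-derivations $d_i$ vanish in $\NA(V)$. Since $n=2$, this means $P_k=0$ in $\NA(V)$ if and only if $d_1(P_k)=0$ and $d_2(P_k)=0$ in $\NA(V)$. The key observation is that $P_k$, as given in Definition \ref{de:Pk}, is exactly an element of the shape $Z=\sum_{i=0}^k\lambda_i(-q_{21})^i\dpu_i\dpu_{k-i}$ treated in Lemma \ref{le:dZ}, with scalars $\lambda_i=q^{i(i-1)/2}$. Hence I can read off both derivations directly from that lemma.

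First I would compute $d_1(P_k)$. By Lemma \ref{le:dZ} it equals $-q_{21}\sum_{i=0}^{k-1}(\lambda_{i+1}-q^i\lambda_i)(-q_{21})^i\dpu_i\dpu_{k-1-i}$, and a one-line exponent check shows $\lambda_{i+1}=q^{i(i+1)/2}=q^iq^{i(i-1)/2}=q^i\lambda_i$, so every coefficient $\lambda_{i+1}-q^i\lambda_i$ vanishes. Thus $d_1(P_k)=0$ already in $T(V)$, and this branch imposes no condition. Next, again by Lemma \ref{le:dZ}, with $\lambda_0=1$ and $\lambda_k=q^{k(k-1)/2}$, I obtain $d_2(P_k)=(\lambda_0+(-r)^ks\lambda_k)\dpu_k=(1+q^{k(k-1)/2}(-r)^ks)\dpu_k$. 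Consequently $d_2(P_k)=0$ in $\NA(V)$ if and only if either the scalar $1+q^{k(k-1)/2}(-r)^ks$ vanishes or $\dpu_k=0$ in $\NA(V)$.

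The crux, and the step I expect to be the main obstacle, is therefore to establish that $\dpu_k\neq0$ in $\NA(V)$ under the hypothesis $(k)_q^!b_k\neq0$. Only then does the vanishing of the scalar become necessary, and the stated equivalence $q^{k(k-1)/2}(-r)^ks=-1$ follows. To prove $\dpu_k\neq0$ I would argue by induction on $k$. Since $u_k=(\ad x_1)^k(x_2)$, applying Lemma \ref{le:d1ad} to $y=x_2$ (of degree $\alpha_2$, with $d_1(x_2)=0$ and $\chi(\alpha_1,\alpha_2)\chi(\alpha_2,\alpha_1)=r$) gives $d_1(u_k)=(k)_q(1-q^{k-1}r)u_{k-1}$. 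After normalizing and using $(k)_q^!=(k)_q(k-1)_q^!$ together with $b_k=(1-q^{k-1}r)b_{k-1}$, this collapses to the clean recursion $d_1(\dpu_k)=\dpu_{k-1}$. The hypothesis $(k)_q^!b_k\neq0$ guarantees $(j)_q^!b_j\neq0$ for every $0\le j\le k$, so each $\dpu_j$ is a nonzero scalar multiple of $u_j$ and the recursion is valid throughout the range.

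Starting from $\dpu_0=x_2\neq0$ in $\NA(V)$, the recursion then propagates the nonvanishing: if $\dpu_{k-1}\neq0$ in $\NA(V)$, then $d_1(\dpu_k)=\dpu_{k-1}\neq0$ in $\NA(V)$, which forces $\dpu_k\neq0$ in $\NA(V)$. Combining this with the two derivation computations finishes the argument: $P_k=0$ in $\NA(V)$ if and only if $d_2(P_k)=0$ in $\NA(V)$, which, since $\dpu_k\neq0$, holds if and only if $1+q^{k(k-1)/2}(-r)^ks=0$, that is, $q^{k(k-1)/2}(-r)^ks=-1$.
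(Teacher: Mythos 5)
Your proof is correct and takes essentially the same route as the source this paper cites for the statement (the paper itself only quotes \cite[Lemma~4.7]{H2018root}): apply Lemma \ref{le:dZ} with $\lambda_i=q^{i(i-1)/2}$ to get $d_1(P_k)=0$ in $T(V)$ and $d_2(P_k)=\bigl(1+q^{k(k-1)/2}(-r)^ks\bigr)\dpu_k$, then conclude via the derivation criterion together with $\dpu_k\ne 0$ in $\NA(V)$. Your inductive argument for $\dpu_k\ne 0$, based on the recursion $d_1(\dpu_k)=\dpu_{k-1}$ from Lemma \ref{le:d1ad} and the fact that $(k)_q^!b_k\ne 0$ forces $(j)_q^!b_j\ne 0$ for all $j\le k$, is a correct self-contained substitute for the known non-vanishing fact that the source simply invokes.
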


\begin{defi}\cite[Definition~4.8]{H2018root}  \label{de:Skt}
For all $k, t\in \ndN _0$ with $0\le t\le k$ and $(k)_q^!b_k\ne 0$ let
\begin{align*}
	\su(k,t)=& \sum_{i=t}^k(-q_{21})^iq^{(i-t)(i-t-1)/2}{i\choose t}_{\!\!q}
	\dpu_i\dpu_{k-i} \in T(V).
\end{align*}
In particular, $\su(k,0)=P_k$.
\end{defi}

\begin{lemm}\cite[Lemma~4.10]{H2018root} \label{ad pi}
	Let $m,k \in \ndN_0$. Let us assume that $(k+m)_q^!b_{k+m}\ne 0$. Then
\begin{align*}
	&q_{12}^{-m}(\ad x_1)^m(P_k)
	=
	\sum_{i=0}^m \frac{(m)_q^!}{(m-i)_q^!}
	\lambda_{(m-i,k)}
	\beta_{(i,m,k)}\su(k+m,i)
\end{align*}
where for any $i,n,m'\in \ndN_0$,
\begin{align*}\lambda_{(n,k)}=\prod_{j=1}^{n}(1-q^{k-1+j}r)(k+j)_q,\quad
      \beta_{(i,m',k)}=\prod_{j=1}^{i}(q^{m'+2k-j}r-r^{-1}).
		\end{align*}
\end{lemm}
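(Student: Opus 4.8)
The plan is to argue by induction on $m$, the engine being an explicit formula for the action of $\ad x_1$ on each building block $\su(k,t)$ (not merely on $P_k=\su(k,0)$). First I would record how $\ad x_1$ moves the normalized root vectors $\dpu_j$. Since $u_j=[x_1^jx_2]$ satisfies $u_j=\ad x_1(u_{j-1})$ by its defining recursion, and $\dpu_j=\frac{1}{(j)_q^!b_j}u_j$, the normalization gives
\[
 \ad x_1(\dpu_j)=(j+1)_q(1-q^jr)\,\dpu_{j+1}.
\]
Combining this with the twisted Leibniz rule $\ad x_1(yz)=\ad x_1(y)z+\chi(\alpha_1,\deg y)\,y\,\ad x_1(z)$ and the value $\chi(\alpha_1,\deg\dpu_i)=q^iq_{12}$ yields a closed expression for $\ad x_1(\dpu_i\dpu_{k-i})$ as a combination of $\dpu_{i+1}\dpu_{k-i}$ and $\dpu_i\dpu_{k-i+1}$.

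The key intermediate step, which I would isolate as a lemma, is the one-step identity
\[
 q_{12}^{-1}\,\ad x_1(\su(k,t))=A(k,t)\,\su(k+1,t)+B(k,t)\,\su(k+1,t+1),
\]
where $A(k,t)=q^t(k-t+1)_q(1-q^{k-t}r)$ and $B(k,t)=(t+1)_q(q^{2k-t}r-r^{-1})$. To prove it I would expand both sides in the basis $\{\dpu_l\dpu_{k+1-l}\}_{0\le l\le k+1}$ of $U_{k+1}$ and compare the coefficient of $\dpu_l\dpu_{k+1-l}$ for each $l$. The left-hand coefficient is the two-term expression inherited from the two summands of $\ad x_1(\dpu_i\dpu_{k-i})$, while on the right it is governed by $q^{(l-t)(l-t-1)/2}\binom{l}{t}_q$ and $q^{(l-t-1)(l-t-2)/2}\binom{l}{t+1}_q$. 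The endpoints $l=t$ and $l=t+1$ already force the scalars $A(k,t),B(k,t)$ above, and matching the interior values is a $q$-Pascal computation.

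With the one-step identity in hand the induction is routine in structure. The base case $m=0$ is the tautology $\su(k,0)=P_k$, since the products defining $\lambda_{(0,k)},\beta_{(0,0,k)}$ are empty. For the inductive step I would apply $\ad x_1$ to the degree-$m$ identity, push it through the sum by the one-step formula, and reindex; writing $C(m,i,k)=\frac{(m)_q^!}{(m-i)_q^!}\lambda_{(m-i,k)}\beta_{(i,m,k)}$, the coefficient of $\su(k+m+1,i)$ becomes
\[
 C(m,i,k)\,A(k+m,i)+C(m,i-1,k)\,B(k+m,i-1),
\]
and the claim reduces to showing this equals $C(m+1,i,k)$. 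The product structure of $\lambda$ and $\beta$ makes this transparent: one checks $A(k+m,i)=q^i\,\lambda_{(m+1-i,k)}/\lambda_{(m-i,k)}$, so the $A$-term absorbs one $\lambda$-factor; then, using $\beta_{(i,m+1,k)}=(q^{m+2k}r-r^{-1})\beta_{(i-1,m,k)}$ and $\beta_{(i,m,k)}=(q^{m+2k-i}r-r^{-1})\beta_{(i-1,m,k)}$, the common $\lambda_{(m+1-i,k)}$ and $\beta_{(i-1,m,k)}$ cancel and the identity separates into its $r$-part and its $r^{-1}$-part. These are the purely combinatorial $q$-integer identities $\frac{(m+1)_q^!}{(m+1-i)_q^!}=q^i\frac{(m)_q^!}{(m-i)_q^!}+(i)_q\frac{(m)_q^!}{(m+1-i)_q^!}$ and its shifted analogue coming from $(m+1)_q-(m+1-i)_q=q^{m+1-i}(i)_q$.

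The main obstacle is the interior case of the one-step identity: unlike the endpoints, the coefficient at general $l$ requires the full relation $\binom{l}{t}_q=\binom{l-1}{t}_q+q^{l-t}\binom{l-1}{t-1}_q$ interacting with the quadratic exponents of $q$, and keeping the $(1-q^{\,\cdot}r)$ factors aligned is delicate. As a useful check I note that $d_1(P_k)=0$, which is immediate from Lemma \ref{le:dZ} because the coefficients $q^{i(i-1)/2}$ of $P_k$ satisfy $\lambda_{i+1}=q^i\lambda_i$; combined with Lemma \ref{le:d1ad} this gives the clean relation $d_1\big((\ad x_1)^m(P_k)\big)=(m)_q(1-q^{m+2k-1}r^2)(\ad x_1)^{m-1}(P_k)$, which both offers an alternative derivation-based route to the induction and serves as a consistency test on the coefficients $C(m,i,k)$.
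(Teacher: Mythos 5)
Your proposal is correct and follows essentially the same route as the paper's source for this statement (the paper itself only cites \cite[Lemma~4.10]{H2018root}): your one-step identity $q_{12}^{-1}\ad x_1(\su(k,t))=q^t(k-t+1)_q(1-q^{k-t}r)\,\su(k+1,t)+(t+1)_q(q^{2k-t}r-r^{-1})\,\su(k+1,t+1)$, obtained from $\ad x_1(\dpu_j)=(j+1)_q(1-q^jr)\dpu_{j+1}$ and the twisted Leibniz rule, is precisely Lemma~4.9 of that reference, and the induction on $m$ via the coefficient recursion $C(m+1,i,k)=C(m,i,k)A(k+m,i)+C(m,i-1,k)B(k+m,i-1)$, separating into the $r$-part and $r^{-1}$-part $q$-integer identities, is exactly how the cited proof concludes. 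I checked the endpoint computations forcing $A(k,t)$, $B(k,t)$ and the two $q$-identities $(m+1)_q=q^i(m+1-i)_q+(i)_q$ and $(m+1)_q=(m+1-i)_q+q^{m+1-i}(i)_q$; all are as you state.
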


\begin{lemm}\cite[Lemma~4.11]{H2018root} \label{le:411}
Let $k,m\in \ndN_0$. Let us suppose that $(k+m+1)_q^!b_{k+m+1}\ne 0$ and that there exists $v\in
U_{k+m+1}'\cap \ker(\pi)$ such that $d_1(v)=(\ad x_1)^m(P_k)$ in $T(V)$.
Then $Q_2^{k,m}=0$.
\end{lemm}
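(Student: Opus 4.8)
The plan is to use that the skew-derivations $d_1$ and $d_2$ are compatible with the canonical projection, namely that $\ker(\pi)$ is stable under each $d_i$ (this is the standard fact implicit in the Remark characterising $\ker(\pi)$, which is what lets $d_i$ act on $\NA(V)$). Granting this, the hypothesis $v\in\ker(\pi)$ will propagate to every derivative of $v$. Throughout I read $Q_2^{k,m}$ as the element $d_2\big((\ad x_1)^m(P_k)\big)$ of $T(V)$.

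First I would push $v$ through $d_1$: since $\pi(v)=0$ and $d_1$ descends, $\pi\big(d_1(v)\big)=0$, and as $d_1(v)=(\ad x_1)^m(P_k)$ in $T(V)$ by hypothesis, this says $(\ad x_1)^m(P_k)=0$ in $\NA(V)$. Applying $d_2$ to the same equation $d_1(v)=(\ad x_1)^m(P_k)$ and invoking the descent of $d_2$ gives $Q_2^{k,m}=d_2\big((\ad x_1)^m(P_k)\big)=d_2\big(d_1(v)\big)\in\ker(\pi)$, that is, $Q_2^{k,m}=0$ in $\NA(V)$.

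It remains to upgrade this to $Q_2^{k,m}=0$ in $T(V)$, for which I would compute $Q_2^{k,m}$ explicitly. From $d_2(x_1)=0$ and the twisted Leibniz rule one checks the commutation $d_2\circ\ad x_1=q_{12}\,\ad x_1\circ d_2$, whence $Q_2^{k,m}=q_{12}^m(\ad x_1)^m\big(d_2(P_k)\big)$. Applying Lemma~\ref{le:dZ} to $P_k$ (so that $\lambda_i=q^{i(i-1)/2}$) yields $d_2(P_k)=\big(1+q^{k(k-1)/2}(-r)^ks\big)\dpu_k$, and since $\ad x_1(u_i)=u_{i+1}$ and $(k)_q^!b_k\ne0$ one obtains $Q_2^{k,m}=c_0\big(1+q^{k(k-1)/2}(-r)^ks\big)u_{k+m}$ with $c_0=q_{12}^m/\big((k)_q^!b_k\big)\in\fie^\times$.

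Finally I would check that $\pi(u_{k+m})\ne0$, which forces $c_0\big(1+q^{k(k-1)/2}(-r)^ks\big)=0$ and hence $Q_2^{k,m}=0$ in $T(V)$. The numerical hypothesis $(k+m+1)_q^!b_{k+m+1}\ne0$ forces $(k+m)_q^!b_{k+m}\ne0$; then, using the direct computation $d_1(u_n)=(n)_q(1-q^{n-1}r)u_{n-1}$ together with the descent of $d_1$, a short induction on $n$ shows $\pi(u_n)\ne0$ whenever $(n)_q^!b_n\ne0$. I expect the main obstacle to be this explicit bookkeeping: identifying $Q_2^{k,m}$ as a scalar multiple of $u_{k+m}$ and confirming that this generator survives in $\NA(V)$. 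Once both are in place, the descent argument of the first two paragraphs closes the proof.
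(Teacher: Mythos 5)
Your proposal proves a different statement, and the problem is the very first convention: ``I read $Q_2^{k,m}$ as the element $d_2\big((\ad x_1)^m(P_k)\big)$ of $T(V)$.'' That is not what $Q_2^{k,m}$ is. In this paper (see the paragraph preceding Lemma \ref{le:Q2vanish}) $Q_2^{k,m}$ is the \emph{scalar}
\[
Q_2^{k,m}=\frac{q^{(2k+m)(m+1)/2}(-r)^{m+1}-1}{q^{2k+m}r^2-1}\,\prod_{i=0}^{m}(1-q^{k+i}r)\in \fie ,
\]
a polynomial in $q$ and $r$ alone; it involves no $s$ and is not an element of $T(V)$. Your computations themselves are essentially sound: using $d_2\circ \ad x_1=q_{12}\,\ad x_1\circ d_2$, Lemma \ref{le:dZ}, and the stability of $\ker(\pi)$ under $d_1$ and $d_2$, you correctly deduce that $d_2\big((\ad x_1)^m(P_k)\big)$, which equals $\big(1+q^{k(k-1)/2}(-r)^ks\big)$ times a nonzero multiple of $u_{k+m}$, lies in $\ker(\pi)$, and hence, since $\pi(u_{k+m})\ne 0$, that $q^{k(k-1)/2}(-r)^ks=-1$, i.e.\ $P_k=0$ in $\NA(V)$ (Lemma \ref{d(Pk)}). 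But that conclusion is logically independent of $Q_2^{k,m}=0$: the latter does not involve $s$ at all, and for instance $Q_2^{0,0}=1$ identically, while $P_0=0$ in $\NA(V)$ whenever $s=-1$. The mismatch matters in practice: where Lemma \ref{le:411} is applied (the proof of Theorem \ref{theo:main}), one takes $k=j\in\ndJ_1$, so $P_j=0$ in $\NA(V)$ is already known and your conclusion is vacuous there, whereas $Q_2^{j,m-j-1}=0$ is exactly the input needed (via Lemmas \ref{le:Q2vanish} and \ref{le:J2}) to conclude $m\in\ndJ$.

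A telltale symptom of the gap is that you never use the hypothesis $v\in U_{k+m+1}'$, as opposed to $v\in U_{k+m+1}$, and this hypothesis is essential. The intended argument writes $v=\sum_{i=0}^{k+m+1}\lambda_i(-q_{21})^i\dpu_i\dpu_{k+m+1-i}$ with $\lambda_{k+m+1}=0$ (this is where $U'$ enters), uses $d_2(v)=\lambda_0\dpu_{k+m+1}\in\ker(\pi)$ together with $\pi(\dpu_{k+m+1})\ne0$ to force $\lambda_0=0$, and then compares $d_1(v)$, computed by Lemma \ref{le:dZ}, with the expansion of $(\ad x_1)^m(P_k)$ in the basis $\dpu_i\dpu_{k+m-i}$ given by Lemma \ref{ad pi}. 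The resulting first-order recursion for the $\lambda_i$ is overdetermined by the two boundary conditions $\lambda_0=\lambda_{k+m+1}=0$, and its consistency condition is the linear relation $\sum_i q^{-i(i+1)/2}\mu_i=0$ on the coefficients $\mu_i$ of $(\ad x_1)^m(P_k)$; that sum is a nonzero multiple of $Q_2^{k,m}$, which is precisely the bookkeeping recorded in this paper in Lemmas \ref{le:exist} and \ref{le:existence}. This coefficient computation, which your proposal deliberately bypasses, is the entire content of the lemma.
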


\section{Root multiplicities over arbitrary fields}
\label{se:multi}
In this section we address the multiplicities for root $m\alpha_1+2\alpha$ with $m\in\ndN_0$ over a field of arbitrary characteristic.

Let $p$ be the characteristic of $\fie$, where $p$ is a prime number or zero.
Now let us introduce the set $\ndJ$ which plays an crucial role in this section.
\begin{defi}\label{def:J}
Let $\ndJ=\ndJ^p_{q,r,s} \subseteq \ndN_0$ be such that $j\in \ndJ$ if and only if
$$q^{j(j-1)/2}(-r)^js=-1,$$ and
for any $n\in \ndJ$ with $n<j$ such that
\begin{align}\label{de:condi}
\begin{cases}
  \left(\frac{j-n}{2}\right)_{\!\!q^{n+j-1}r^2}=0, &~\text{if $j-n$ is even,}\\
 \left(j-n\right)_{-q^{(n+j-1)/2}r}=0, &~\text{if $j-n$ is odd}.
\end{cases}
\end{align}
\end{defi}

\begin{rema}
Note that if $p=0$ then Definition \ref{def:J} is equivalent to \cite[Definition~4.12]{H2018root}.
\end{rema}

\begin{exam}\label{ex:J}
Here we give an example where the $\ndJ$ defined in Definition \ref{def:J} is not the $\ndJ$ defined in \cite[Definition~4.12]{H2018root}.

Assume that $\mathrm{char}(\fie)=3$. Let $s=-1$, $r=-q^{-1}$. Note that then
$0\in \ndJ$. Since $-r\ne0, ~qr^2\ne0$, we get that both $j=1,n=0$ and $j=2,n=0$ don't satisfy \eqref{de:condi}, then $1,2\notin \ndJ$.
As $qr=-1, ~s=-1$ we get $q^{3}(-r)^{3}s=-1$. Moreover we get $(3)_{-qr}=0$, since $qr=-1$ and $p=3$. Hence \eqref{de:condi} holds for $j=3,n=0$, it follows $3\in\ndJ$. Hence $\ndJ\cap[0,3]=\{0,3\}$.

Note that in \cite[Definition~4.12]{H2018root}, $\ndJ\cap[0,3]=\{0\}$.

\end{exam}

\begin{lemm}\label{le:upbound}
For any $j\in\ndJ$, the integers $j+1$ and $j+2$ are not in $\ndJ$. In particular, for any $m\in \ndN_0$,
$$\big|\ndJ\cap[0,m]|\le \frac{m}{3}+1.$$
\end{lemm}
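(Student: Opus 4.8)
The plan is to unwind Definition \ref{def:J} directly, exploiting the crucial feature that the defining constraints for an index $j'$ include, for each $n\in\ndJ$ with $n<j'$, the vanishing of a certain $q$-integer from \eqref{de:condi}. Given $j\in\ndJ$, I would argue by contradiction, supposing $j+1\in\ndJ$ (respectively $j+2\in\ndJ$) and then extracting the instance of \eqref{de:condi} obtained by taking $n=j$, which is legitimate precisely because $j\in\ndJ$ and $j<j+1$ (respectively $j<j+2$).

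First I would treat $j+1$. Here $(j+1)-j=1$ is odd, so the relevant clause of \eqref{de:condi} is the second one, demanding
$$\left(1\right)_{-q^{(j+(j+1)-1)/2}r}=\left(1\right)_{-q^{j}r}=0.$$
Since $(1)_t=1$ for every $t\in\fie^{\times}$, this is impossible, so \eqref{de:condi} fails for the pair $(j+1,j)$ and hence $j+1\notin\ndJ$. Next I would treat $j+2$. Now $(j+2)-j=2$ is even, so the first clause of \eqref{de:condi} applies, demanding
$$\left(\frac{2}{2}\right)_{q^{j+(j+2)-1}r^2}=\left(1\right)_{q^{2j+1}r^2}=0,$$
which is again impossible for the same reason. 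Thus $j+2\notin\ndJ$ as well.

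For the counting bound I would list the elements of $\ndJ\cap[0,m]$ as $j_0<j_1<\cdots<j_{k-1}$ and use the first part to note that consecutive elements satisfy $j_{i+1}\ge j_i+3$, since both $j_i+1$ and $j_i+2$ lie outside $\ndJ$. Iterating gives $m\ge j_{k-1}\ge j_0+3(k-1)\ge 3(k-1)$, whence $k\le m/3+1$, as claimed.

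The argument is essentially immediate once one observes that the obstruction reduces in both cases to the vanishing of $(1)_{\bullet}$; the only point requiring care is the bookkeeping of the parity of $j'-n$ and of the exponents in the subscripts, so that the correct clause of \eqref{de:condi} is invoked and the $q$-integer genuinely collapses to $(1)_{\bullet}=1$. I do not anticipate any substantial obstacle beyond this verification.
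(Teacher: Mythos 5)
Your proposal is correct and follows essentially the same route as the paper's proof: both take $j, j+t \in \ndJ$ with $t \in \{1,2\}$, invoke condition \eqref{de:condi} with $n = j$, and observe that in either parity case the required vanishing collapses to $(1)_{\bullet} = 0$, which is impossible; the counting argument via gaps of size at least $3$ is likewise identical.
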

\begin{proof}
Let $j\in \ndN_0$ and $t\in \ndN$. Suppose that $j,j+t\in \ndJ$.
Hence
 $$\left(\frac{t}{2}\right)_{q^{2j+t-1}r^2}=0,~\text{if $t$ is even, or}
  ~(t)_{-q^{(2j+t-1)/2}r}=0, ~\text{if t is odd.}$$
 This gives a contradiction for $t=1$ and $t=2$.
\end{proof}
\begin{exam}\label{ex:smallnumb}
By the definition of $\ndJ$ and by Lemma \ref{le:upbound} the following hold.
\begin{enumerate}
  \item $0\in \ndJ$ if and only if $s=-1$.
  \item $1\in \ndJ$ if and only if $rs=1$ and $s\ne-1$.
  \item $2\in \ndJ$ if and only if $qr^2s=-1$, $rs\ne1$ and $s\ne-1$.
\end{enumerate}
\end{exam}


\begin{lemm}\label{le:J}
Let $j\in \ndN_0$. 
Then $j\in \ndJ$ if and only if one of the following conditions holds:
\begin{enumerate}
\item $q^{j(j-1)/2}(-r)^js=-1$ and $q^{n+j-1}r^2\ne1$, for any $n\in \ndJ$, $n<j$;
\item there exists some $n\in \ndJ$ with $n<j$, such that
\begin{align*}
\begin{cases}
   q^{n+j-1}r^2=1 ~\text{and}~ ~2p|(j-n), &\text{if $j-n$ is even,}\\
   q^{(n+j-1)/2}r=-1 ~\text{and}~ ~p|(j-n),&\text{if $j-n$ is odd}.
\end{cases}
\end{align*}
 \end{enumerate}
\end{lemm}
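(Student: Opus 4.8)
The plan is to reduce both Definition \ref{def:J} and the two conditions of the lemma to a single elementary vanishing criterion for quantum integers together with one multiplicative identity. First I would record the criterion: for $a\in\fie^{\times}$ and $k\in\ndN$, the quantum integer $(k)_a$ vanishes if and only if either $a=1$ and $p\mid k$, or $a\ne1$ and $a^k=1$ (reading $p\mid k$ as $k=0$ when $p=0$). Second, I would establish the key identity: for any $m$ with $q^{m(m-1)/2}(-r)^m s=-1$ and any $n<j$ both of this form, dividing the relation for $m=j$ by that for $m=n$ gives $q^{(j-n)(j+n-1)/2}(-r)^{j-n}=1$. A direct rewriting shows this last equality is exactly $(q^{n+j-1}r^2)^{(j-n)/2}=1$ when $j-n$ is even, and exactly $(-q^{(n+j-1)/2}r)^{j-n}=1$ when $j-n$ is odd.

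Now fix $j$ with $q^{j(j-1)/2}(-r)^js=-1$ and take $n\in\ndJ$ with $n<j$; the identity then holds since $n\in\ndJ$ forces $q^{n(n-1)/2}(-r)^ns=-1$. Writing $a=q^{n+j-1}r^2$ (even case) and $a'=-q^{(n+j-1)/2}r$ (odd case, so $a=(a')^2$), the identity reads $a^{(j-n)/2}=1$, respectively $(a')^{j-n}=1$. Combined with the vanishing criterion, this shows \eqref{de:condi} is automatic whenever the relevant base differs from $1$: if $a\ne1$ then $a^{(j-n)/2}=1$ yields $\big(\tfrac{j-n}{2}\big)_a=0$, and if $a'\ne1$ then $(a')^{j-n}=1$ yields $(j-n)_{a'}=0$ (note $a'=-1$ is impossible, as $j-n$ is odd and $(a')^{j-n}=1$). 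Consequently, under $q^{j(j-1)/2}(-r)^js=-1$, condition \eqref{de:condi} can fail only when the base equals $1$, that is, when $q^{n+j-1}r^2=1$ (even) or $q^{(n+j-1)/2}r=-1$ (odd), and there it reduces to $2p\mid(j-n)$, respectively $p\mid(j-n)$. This gives the working criterion: $j\in\ndJ$ if and only if $q^{j(j-1)/2}(-r)^js=-1$ and the corresponding divisibility holds for every $n\in\ndJ$, $n<j$, whose base equals $1$.

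The forward implication is then immediate. If $j\in\ndJ$ and no predecessor has base $1$, we are in case (1); if some $n\in\ndJ$, $n<j$, has base $1$, the working criterion supplies the divisibility, which is case (2). For the converse, case (1) gives $q^{j(j-1)/2}(-r)^js=-1$ together with $q^{n+j-1}r^2\ne1$ for all $n$ (and $q^{(n+j-1)/2}r=-1$ would force $q^{n+j-1}r^2=1$), so every \eqref{de:condi} is automatic and $j\in\ndJ$. In case (2), from a degenerate $n_0$ with base $1$ the multiplicative identity holds trivially, and multiplying it into the relation $q^{n_0(n_0-1)/2}(-r)^{n_0}s=-1$ recovers $q^{j(j-1)/2}(-r)^js=-1$; so the first defining condition of $\ndJ$ comes for free.

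The main obstacle is the remaining part of the converse in case (2): from the existence of \emph{one} degenerate predecessor $n_0$ carrying the divisibility I must deduce the divisibility for \emph{every} degenerate predecessor, since the definition of $\ndJ$ quantifies over all $n$. I plan to handle this by transitivity, perhaps organized as an induction on $j$. Given a second degenerate predecessor $n_1<n_0<j$, the two degeneracies force $q^{n_0-n_1}=1$, and since $n_0,n_1\in\ndJ$ condition \eqref{de:condi} holds for the pair $(n_1,n_0)$; computing its base as $q^{n_0-j}$, I would split on whether this base is $1$. When it is $1$, \eqref{de:condi} for $(n_1,n_0)$ gives $2p\mid(n_0-n_1)$, which with $2p\mid(j-n_0)$ yields $2p\mid(j-n_1)$. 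The delicate point is the case $q^{n_0-j}\ne1$; here I expect to use the spacing bound of Lemma \ref{le:upbound} together with the fact that in characteristic $p$ the multiplicative order of $q$ is coprime to $p$ to rule out a conflicting predecessor, so that all degenerate predecessors share the same divisibility status. The odd and mixed-parity interactions among $n_0$, $n_1$ and $j$ require the analogous bookkeeping and are where most of the care will be needed.
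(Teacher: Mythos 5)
Your core argument is the same as the paper's own proof: divide the defining relation at $j$ by the one at $n$ to get $q^{(j-n)(n+j-1)/2}(-r)^{j-n}=1$, factor $1-a^k=(1-a)(k)_a$ with $a=q^{n+j-1}r^2$ (resp.\ $a=-q^{(n+j-1)/2}r$), and conclude that \eqref{de:condi} holds automatically for every predecessor whose base differs from $1$, while for a degenerate predecessor it reduces to the divisibility condition in (2). Your forward implication and converse case (1) are complete. The genuine gap is exactly the step you defer to the end: membership in $\ndJ$ demands \eqref{de:condi} for \emph{every} $n\in\ndJ$ below $j$, whereas condition (2) supplies the divisibility for \emph{one} degenerate predecessor, so you must show all degenerate predecessors share it; for this you offer only a plan (``rule out a conflicting predecessor'' via Lemma \ref{le:upbound} and the order of $q$), not an argument.

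That plan cannot be completed, because conflicting degenerate predecessors genuinely exist: without a hypothesis of the form $(m)_q^!b_m\ne0$, the asserted equivalence is false. Take $p=3$, let $g$ generate $\mathbb{F}_9^{\times}$, and set $q=g^2$, $r=g^3$, $s=g^4=-1$; then $r^2=q^{-1}$, so for $j-n$ even the base is $q^{n+j-1}r^2=q^{n+j-2}$, which equals $1$ iff $4\mid(n+j-2)$. One checks $\ndJ\cap[0,4]=\{0,4\}$: indeed $s=-1$ gives $0\in\ndJ$; $j=1,3$ fail $q^{j(j-1)/2}(-r)^js=-1$; $j=2$ fails \eqref{de:condi} at $n=0$ (base $qr^2=1$ and $(1)_1=1\ne0$); and $j=4$ passes, since $q^6r^4s=-1$, the base at $n=0$ is $q^2=-1\ne1$, and $(2)_{-1}=0$. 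Now $j=10$ satisfies your case (2) with witness $n_0=4$: the base $q^{12}=1$ and $2p=6$ divides $j-n_0=6$. Yet $10\notin\ndJ$, because the degenerate predecessor $n_1=0$ (base $q^{8}=1$) violates \eqref{de:condi}: $(5)_1=5\ne0$ in characteristic $3$. This sits precisely in your ``delicate'' subcase, since the pair $(0,4)$ has base $q^{n_0-j}=q^{-6}=-1\ne1$; the divisibility simply does not transfer. So the missing step is not merely delicate --- it is false in general, and the only way to close it is to import the standing hypothesis under which the lemma is actually used: if $(m)_q^!b_m\ne0$ with $j\le m$ (as in Lemma \ref{le:J2}), then Lemma \ref{le:unique} shows $j$ has at most one degenerate predecessor, except when $q=1$, $r=-1$, where $\ndJ\cap[0,m]\subseteq\{0\}$; with that in hand your case (2) closes immediately. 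I should add that the paper's own proof is silent on this same quantifier issue (it verifies \eqref{de:condi} one $n$ at a time and then asserts the equivalence), so your proposal at least isolates a real defect in the statement; but as written it is not a complete proof.
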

\begin{proof}



We first prove that for any $j\in\ndN_0$ satisfying condition (1) or (2),  we get
\begin{align}
\label{eq:1}
q^{j(j-1)/2}(-r)^js=-1.
\end{align}
It is clear if $j$ satisfies condition (1).

If $j$ satisfies condition (2), then there exists $n\in\ndJ$ with $n<j$ such that $q^{n+j-1}r^2=1$ and $q^{n(n-1)/2}(-r)^ns=-1$.
If $n-j$ is even, then
$$q^{j(j-1)/2}(-r)^js=-q^{(j-n)(n+j-1)/2}(-r)^{j-n}=-(q^{n+j-1}r^2)^{(j-n)/2}=-1.$$
If $n-j$ is odd, then $q^{(n+j-1)/2}r=-1$,
$$q^{j(j-1)/2}(-r)^js=-q^{(j-n)(n+j-1)/2}(-r)^{j-n}=-(-q^{(n+j-1)/2}r)^{j-n}=-1.$$
Therefore Equation \eqref{eq:1} holds.

Suppose now $j\in \ndN_0$ satisfies Equation \eqref{eq:1}.
For any $n\in\ndJ$ with $n<j$, we have
\begin{align}
\label{eq:2}
q^{n(n-1)/2}(-r)^ns=-1.
\end{align}
From Equations \eqref{eq:1} and \eqref{eq:2} we get, $q^{(j-n)(n+j-1)/2}(-r)^{j-n}-1=0$.

 If $j-n$ is even. Then
 $$0=1-q^{(j-n)(n+j-1)/2}(-r)^{j-n}=(1-q^{n+j-1}r^2)
 \left(\frac{j-n}{2}\right)_{q^{n+j-1}r^2}  $$
Thus $j\in\ndJ$, that is,
  $$\left(\frac{j-n}{2}\right)_{q^{n+j-1}r^2}=0,$$
if and only if either condition (1) or condition (2) holds.

 If $j-n$ is odd,
   $$1-q^{(j-n)(n+j-1)/2}(-r)^{j-n}=(1+q^{(n+j-1)/2}r)
 \left(j-n\right)_{-q^{(n+j-1)/2}r}. $$
 Therefore, $j\in\ndJ$, that is,
 $$\left(n-j\right)_{-q^{(n+j-1)/2}r}=0$$
if and only if either condition (1) or condition (2) holds.

Thus we prove this lemma.
\end{proof}

\begin{rema}
Let $j,m\in \ndN_0$ with $j\le m$. Suppose that $b_m\ne0$, then the condition (2) in Lemma \ref{le:J} is equivalent to the following condition
\begin{align*}
&q^{n+j-1}r^2=1, \text{for some $n\in \ndJ$ with $n<j$, and } \\
&\text{$p|(j-n)$ if $p$ is odd, or $4|(j-n)$ if $p=2$}.
\end{align*}
\end{rema}

\begin{defi}\label{def:J1J2}
Let $\ndJ_1\subseteq\ndJ$ be the set of elements satisfying the condition (1) in Lemma \ref{le:J}.

Let $\ndJ_2\subseteq\ndJ$ be the set of elements satisfying the condition (2) in Lemma \ref{le:J}.
\end{defi}


\begin{rema}\label{re:p=0}
Note that $\ndJ=\ndJ_1\cup\ndJ_2.$
In particular, if $p=0$, then $\ndJ=\ndJ_1$. 
\end{rema}

\begin{exam}
In Example \ref{ex:J}, we obtain that $\ndJ_1\cap[0,3]=\{0\}$ and $\ndJ_2\cap[0,3]=\{3\}$.
\end{exam}

\begin{lemm}\label{le:unique}
Suppose that $n, m\in \ndN_0$ such that $(m)_q^!b_m\ne0$.
Then the following hold.
\begin{enumerate}
\item If $p=2$, then there exists at most one element $j\in\ndN_0$ with $j\le m$ such that
    $ q^{n+j-1}r^2=1.$
\item Assume that $p\ne 2$ and that there exist $j_1,j_2\in\ndN_0$ with $j_1< j_2\le m$ such that
$ q^{n+j_t-1}r^2=1, ~\text{for} ~t=1,2$,
 then $q=1$ and $r=-1$.
Further, we get $\ndJ\cap[0,m]=\{0\}$ or $\ndJ\cap[0,m]=\emptyset$. In particular, $\ndJ_2\cap[0,m]=\emptyset$.
\end{enumerate}
\end{lemm}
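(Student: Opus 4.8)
The plan is to analyze the equation $q^{n+j-1}r^2=1$ as a constraint on the exponent $j$, treating $n$ as fixed, and to exploit the hypothesis $(m)_q^! b_m\neq 0$ to rule out degenerate cases. The key observation is that if two values $j_1<j_2$ (both $\le m$) satisfy $q^{n+j_t-1}r^2=1$, then dividing the two relations gives $q^{j_2-j_1}=1$. Since $j_2-j_1\le m$ and $(m)_q^!=\prod_{i=1}^m(i)_q\neq 0$ forces $q^i\neq 1$ for all $1\le i\le m$ (equivalently the multiplicative order of $q$ exceeds $m$, or $q=1$ in a way controlled by characteristic), I would extract the precise constraint on the characteristic. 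For part~(1), with $p=2$, I expect to show that the possibility of two solutions collapses immediately: the relation $q^{j_2-j_1}=1$ together with $(m)_q^!\neq 0$ leaves no room in characteristic $2$, giving uniqueness.

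For part~(2), assuming $p\neq 2$ and the existence of $j_1<j_2$, I would first derive $q^{j_2-j_1}=1$ as above. The hypothesis $(m)_q^!\neq 0$ means $(i)_q\neq 0$ for $1\le i\le m$; in characteristic $p\neq 2$ this is compatible with $q^{j_2-j_1}=1$ only if $q=1$ (one must check that a nontrivial root of unity of order $\le m$ would make some $(i)_q$ vanish, using $1+q+\cdots+q^{i-1}=0$ at $i$ equal to that order). Once $q=1$, the defining relation reduces to $r^2=1$, so $r=\pm1$; I would then exclude $r=1$ by invoking $b_m=\prod_{i=0}^{m-1}(1-q^i r)=\prod_{i=0}^{m-1}(1-r)$, which vanishes when $r=1$ (provided $m\ge1$), contradicting $b_m\neq 0$. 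This pins down $q=1$ and $r=-1$.

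With $q=1$ and $r=-1$ established, I would compute $\ndJ\cap[0,m]$ directly from Definition~\ref{def:J} and Example~\ref{ex:smallnumb}. The relation $q^{j(j-1)/2}(-r)^j s=-1$ becomes $s=-1$ (for $j$ giving $(-r)^j=1$) or its sign-twisted variant, and using the small-value criteria I expect to show that at most $0$ can lie in $\ndJ$: indeed $0\in\ndJ$ iff $s=-1$, while $1\in\ndJ$ would require $rs=1$ i.e. $s=-1$ together with $s\neq-1$, a contradiction, and similarly $2\in\ndJ$ is excluded. For $j\ge1$ I would argue that condition~\eqref{de:condi} relative to $n=0$ (using $q=1,r=-1$, so $q^{n+j-1}r^2=1$) forces either $2p\mid j$ or $p\mid j$ depending on parity, but the factor $(1-q^{n+j-1}r^2)=0$ must then be compensated, and the quantum-integer conditions with base $1$ or $-1$ cannot vanish in the required range—this is where Lemma~\ref{le:upbound} and the explicit parity analysis enter. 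The conclusion $\ndJ\cap[0,m]=\{0\}$ or $\emptyset$ then follows, and since the surviving element $0$ satisfies condition~(1) of Lemma~\ref{le:J}, it lies in $\ndJ_1$, giving $\ndJ_2\cap[0,m]=\emptyset$.

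The main obstacle I anticipate is the final bookkeeping in part~(2): after reducing to $q=1,r=-1$, one must carefully verify that no $j\ge3$ in $[0,m]$ can enter $\ndJ$ via condition~(2) of Lemma~\ref{le:J}, since condition~(2) is precisely the mechanism that introduced new elements in Example~\ref{ex:J}. The delicate point is that entering $\ndJ$ through $n=0$ with $q^{j-1}r^2=1$ (automatic here) requires the divisibility $p\mid j$ (odd case) or $2p\mid j$ (even case), yet the vanishing of the relevant quantum integer $(j)_{-q^{(j-1)/2}r}$ or $\left(\tfrac{j}{2}\right)_{q^{j-1}r^2}$ with base exactly $1$ or $-1$ must be checked to \emph{fail}, because a quantum integer with base $1$ equals the ordinary integer and with base $-1$ is either $0$ or $1$. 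Disentangling these cases to confirm that $\ndJ_2\cap[0,m]=\emptyset$ is the crux; I would handle it by splitting on the parity of $j$ and comparing the forced divisibility against the actual order of the relevant base, showing the two requirements are jointly unsatisfiable for $q=1,r=-1$ in odd characteristic.
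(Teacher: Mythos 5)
Your overall route coincides with the paper's: divide the two relations to get $q^{j_2-j_1}=1$, use $(m)_q^!b_m\ne 0$ to pin down $q=1$ and $r=-1$, then analyze $\ndJ$ under these values. However, there are two genuine gaps, and both occur exactly where you stop short of invoking the hypothesis $(m)_q^!b_m\ne 0$ a second time. First, in part (1) your claimed mechanism is wrong as stated: $q^{j_2-j_1}=1$ together with $(m)_q^!\ne 0$ does \emph{not} ``leave no room'' in characteristic $2$, because $q=1$ is compatible with both. The contradiction in characteristic $2$ needs the chain you describe only in part (2): $q=1$ forces $r^2=1$, and since $x^2-1=(x-1)^2$ in characteristic $2$ this forces $r=1$; then $1-r=0$ is a factor of $b_m$ (note $m\ge j_2\ge 1$), contradicting $b_m\ne 0$. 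This is precisely the paper's proof of (1); you have all the ingredients but deploy them only in the odd-characteristic part.

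Second --- and this is the step you yourself flag as the crux --- your plan for showing $\ndJ_2\cap[0,m]=\emptyset$ cannot work as described. You propose to compare ``the forced divisibility against the actual order of the relevant base'' and show ``the two requirements are jointly unsatisfiable,'' but these are not two independent requirements: with $q=1$, $r=-1$ the base $q^{n+j-1}r^2$ (resp.\ $-q^{(n+j-1)/2}r$) equals $1$, so the quantum integer in \eqref{de:condi} is the ordinary integer $(j-n)/2$ (resp.\ $j-n$) viewed in $\fie$, and its vanishing is \emph{literally the same condition} as the divisibility $2p\mid(j-n)$ (resp.\ $p\mid(j-n)$) --- that equivalence is the content of Lemma \ref{le:J}. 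No contradiction can be extracted by playing them against each other. The missing closing move is to use $(m)_q^!\ne 0$ once more, as the paper does: if $\ndJ\cap[0,m]$ contained some $j\ge 1$ (say the minimal element of $\ndJ_2\cap[0,m]$, whose witness in Lemma \ref{le:J}(2) must be $0$ since $\ndJ_1=\{0\}$), then $p\mid j$, hence $p\le j\le m$; but $q=1$ gives $(p)_q=p=0$ in $\fie$, so $(p)_q$ is a vanishing factor of $(m)_q^!$, a contradiction. Equivalently: $q=1$ and $(m)_q^!\ne 0$ force $m<p$, so no multiple of $p$ lies in $[1,m]$. Note also that part (2) includes $p=0$, which your final paragraph (phrased for ``odd characteristic'') silently omits; there the conclusion is immediate since the ordinary integers $j-n$, $(j-n)/2$ cannot vanish, which is how the paper disposes of that sub-case via Remark \ref{re:p=0}.
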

\begin{proof}
(1) If $m=0$, the claim is trivial. Suppose that $m\ge1$.
 If there exist $j_1,j_2\in\ndN_0$ such that $j_1<j_2\le m$ and that
 $q^{n+j_i-1}r^2-1=0, ~\text{for} ~i=1,2$. Then $q^{j_2-j_1}=1$. As $(m)_q^!\ne0$ and $j_2-j_1\le m$, we get $q=1$. Therefore $r^2=1$, since $q^{n+j_1-1}r^2-1=0$. Thus $r=1$, since $p=2$, this gives a contradiction to $b_m\ne0$. This proves (1).

(2) From the discussions in (1), it follows that $q=1,r=-1$.

If $\ndJ\ne\emptyset$, then there exists $j\in\ndJ$, such that $q^{j(j-1)/2}(-r)^js=-1$,
the above argument implies $s=-1$. Hence $0\in \ndJ_1$.
Moreover, we obtain that $q^ir^2=1$, for any $i\in \ndN_0$, thus $\ndJ_1=\{0\}$.

If $p=0$, then it is clear that $\ndJ=\ndJ_1=\{0\}$, applying Remark ~\ref{re:p=0}.
Assume now $p$ is positive. Let us prove $\ndJ_2\cap[0,m]=\emptyset$.
Suppose that $\ndJ_2\cap[0,m]\ne\emptyset$, and let $n\in\ndN$ be the minimal element in $\ndJ_2\cap[0,m]$.
From Lemma \ref{le:J} and $\ndJ_1=\{0\}$ we get that $p|n$. Thus $p\le n\le m$ and $(p)_q=0$ as $q=1$, this is contradictory to $(m)_q^!\ne0$. Thus $\ndJ_2\cap[0,m]=\emptyset$. Hence
$\ndJ=\{0\}$.
\end{proof}

\begin{lemm}\label{le:J2}
Suppose that $m\in \ndN_0$ and that $(m)_q^!b_m\ne0$. Then
\begin{enumerate}
 \item $j\in \ndJ_1\cap[0,m]$ if and only if
    $$ q^{j(j-1)/2}(-r)^js=-1,\text{and}~ q^{n+j-1}r^2\ne1~\text{for any $n\in\ndJ_1$ $n<j$}.$$

 \item $j\in \ndJ_2\cap[0,m]$ if and only if there exists $n\in\ndJ_1$ with $n<j$ such that
   $$q^{n+j-1}r^2=1 ~\text{and $p|(j-n)$ if $p$ is odd, or $4|(j-n)$ if $p=2$}.$$
\end{enumerate}
\end{lemm}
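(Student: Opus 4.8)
The plan is to derive both equivalences from one key claim: under the hypothesis $(m)_q^!b_m\ne0$, for every $j\le m$ there is no index $n\in\ndJ_2\cap[0,j)$ with $q^{n+j-1}r^2=1$. Granting this, the lemma follows at once. Indeed $\ndJ$ is the disjoint union of $\ndJ_1$ and $\ndJ_2$ (Remark~\ref{re:p=0}), these being separated by the mutually exclusive conditions (1) and (2) of Lemma~\ref{le:J}, in each of which the auxiliary index $n$ ranges over all of $\ndJ$. The claim says that the indices in $\ndJ_2$ never produce the equality $q^{n+j-1}r^2=1$, so they are invisible to both conditions and the range $\ndJ$ may be replaced by $\ndJ_1$.

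First I would prove the claim. Suppose, for contradiction, that $n\in\ndJ_2\cap[0,j)$ and $q^{n+j-1}r^2=1$. Since $n<j\le m$ and $b_m\ne0$, the remark following Lemma~\ref{le:J} (applied with $n$ in the role of its index) yields some $n'\in\ndJ$ with $n'<n$ and $q^{n'+n-1}r^2=1$. The exponent $q^{a+b-1}r^2$ is symmetric in $a$ and $b$, so both equalities take the form $q^{n+v-1}r^2=1$, once with $v=n'$ and once with $v=j$; thus $n'$ and $j$ are two distinct solutions, both $\le m$. This is impossible by Lemma~\ref{le:unique}: if $p=2$, its part (1) permits at most one solution $\le m$; if $p\ne2$, its part (2) forces $q=1$, $r=-1$ and $\ndJ_2\cap[0,m]=\emptyset$, contradicting $n\in\ndJ_2\cap[0,m]$. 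This establishes the claim.

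It then remains to transfer the claim to the two assertions. In (1) the equation $q^{j(j-1)/2}(-r)^js=-1$ appears verbatim in both formulations, so only the quantified inequality must be matched; the inclusion $\ndJ_1\subseteq\ndJ$ gives one direction, and for the other an index $n\in\ndJ\cap[0,j)$ is either in $\ndJ_1$, covered by the hypothesis, or in $\ndJ_2$, where the claim forces $q^{n+j-1}r^2\ne1$. Hence the condition over $\ndJ_1$ is equivalent to condition (1) of Lemma~\ref{le:J}, which is the definition of $\ndJ_1$. For (2) I would first use the remark after Lemma~\ref{le:J} (valid as $b_m\ne0$ and $j\le m$) to rewrite membership in $\ndJ_2$ as the existence of $n\in\ndJ$ with $n<j$, $q^{n+j-1}r^2=1$ and the displayed divisibility; the claim then forces any such $n$ into $\ndJ_1$, while the reverse inclusion is immediate from $\ndJ_1\subseteq\ndJ$.

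The heart of the argument is the claim, whose difficulty is concentrated in the degenerate case $q=1$: there $q^{n+j-1}r^2=1$ collapses to $r^2=1$ and encodes nothing about the indices, so a direct order-of-$q$ comparison is unavailable. Lemma~\ref{le:unique} is exactly the tool that resolves this, its part (1) disposing of $p=2$ for all $q$, and its part (2) converting the coexistence of two solutions into the rigid conclusion $\ndJ_2\cap[0,m]=\emptyset$; and it is the standing hypothesis $b_m\ne0$ that makes both Lemma~\ref{le:unique} and the remark after Lemma~\ref{le:J} available in the first place.
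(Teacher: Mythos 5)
Your proposal is correct and follows essentially the same route as the paper: both reduce the lemma to the key claim that no $n\in\ndJ_2$ with $n<j\le m$ can satisfy $q^{n+j-1}r^2=1$, and both prove that claim by contradiction, extracting from $n\in\ndJ_2$ a second index $k<n$ with $q^{n+k-1}r^2=1$ and then invoking Lemma~\ref{le:unique}(1) when $p=2$ and Lemma~\ref{le:unique}(2) when $p\ne2$. The only difference is presentational: the paper leaves the transfer of the claim back to assertions (1) and (2) implicit, which you spell out.
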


\begin{proof}
It is enough to prove that for any $j\in[0,m]$, we have
  $$q^{n+j-1}r^2\ne1,~ \text{for all $n\in\ndJ_2$ with $n<j$}.$$

If $\ndJ_2\cap[0,m]=\emptyset$ there is nothing to prove.
Let us suppose now $\ndJ_2\cap[0,m]\ne\emptyset$.
Assume oppositely that $j\in[0,m]$ and that there exists $n\in\ndJ_2$ with $n<j$ such that
 $q^{n+j-1}r^2=1$. As $n\in \ndJ_2$, let $k\in\ndJ$ with $k<n$ such that
 $q^{n+k-1}r^2=1$. As $k<j\le m$, we get an contradiction when $p=2$ because of Lemma \ref{le:unique}(1). When $p\ne2$, again as $k<j\le m$,
 it follows that $\ndJ_2\cap[0,m]=\emptyset$ because of Lemma \ref{le:unique}(2), this is also an contradiction. Hence the lemma follows.
\end{proof}

\begin{lemm}\label{le:unique2}
Suppose that $m\in \ndN_0$, and that $(m)_q^!b_m\ne0$, then for any $n\in \ndJ_2\cap[0,m]$, there exists a unique $j_n\in \ndJ$ with $j_n<n$ such that $q^{n+j_n-1}r^2-1=0$. Moreover, $j_n\in \ndJ_1$.
\end{lemm}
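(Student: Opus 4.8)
The plan is to prove existence and uniqueness separately, and then to upgrade the membership $j_n\in\ndJ$ to $j_n\in\ndJ_1$. For \emph{existence}, I would start from the assumption $n\in\ndJ_2\cap[0,m]$. By Lemma \ref{le:J2}(2) (equivalently by the definition of $\ndJ_2$ via condition (2) of Lemma \ref{le:J}), there exists some $j\in\ndJ$ with $j<n$ satisfying $q^{n+j-1}r^2=1$, which is exactly the equation $q^{n+j_n-1}r^2-1=0$ we want. So existence is immediate and requires no real work; I would just cite Lemma \ref{le:J2}(2).

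For \emph{uniqueness}, suppose toward a contradiction that there are two distinct indices $j_1<j_2<n$ in $\ndJ$ with $q^{n+j_t-1}r^2=1$ for $t=1,2$. This is precisely the hypothesis of Lemma \ref{le:unique}, applied with the roles set up so that two distinct solutions of $q^{n+j-1}r^2=1$ exist below $m$. If $p=2$, Lemma \ref{le:unique}(1) forbids two such solutions outright, giving the contradiction. If $p\neq 2$, Lemma \ref{le:unique}(2) forces $q=1$, $r=-1$, and moreover $\ndJ_2\cap[0,m]=\emptyset$; but $n\in\ndJ_2\cap[0,m]$ by hypothesis, again a contradiction. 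Either way at most one such $j_n$ exists, establishing uniqueness.

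Finally, to show $j_n\in\ndJ_1$ rather than merely $j_n\in\ndJ$, I would invoke Lemma \ref{le:J2}(2), which characterizes $\ndJ_2\cap[0,m]$ in terms of a witness $n\in\ndJ_1$ (not just $\ndJ$) with $q^{n+j-1}r^2=1$. Concretely, Lemma \ref{le:J2}(2) guarantees that the witnessing index can be taken in $\ndJ_1$; combined with the uniqueness just proved, the unique witness $j_n$ must coincide with this $\ndJ_1$-witness, hence $j_n\in\ndJ_1$. Alternatively, if $j_n$ lay in $\ndJ_2$, then $j_n$ itself would have its own witness $k\in\ndJ$ with $k<j_n$ and $q^{j_n+k-1}r^2=1$; together with $q^{n+j_n-1}r^2=1$ this produces two distinct solutions $k$ and $n$ of $q^{x+j_n-1}r^2=1$ below $m$, contradicting Lemma \ref{le:unique} exactly as in the uniqueness argument. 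So $j_n\notin\ndJ_2$, and since $\ndJ=\ndJ_1\sqcup\ndJ_2$ we conclude $j_n\in\ndJ_1$.

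The main obstacle I anticipate is bookkeeping rather than conceptual difficulty: the whole argument is a careful orchestration of Lemma \ref{le:unique} (the ``at most one solution of $q^{n+j-1}r^2=1$'' principle) together with the $\ndJ_1$/$\ndJ_2$ dichotomy from Lemma \ref{le:J2}. The delicate point is making sure the hypothesis $(m)_q^!b_m\neq0$ is genuinely used to rule out the degenerate case $q=1,r=-1$, and that all the inequalities $j_n<n\le m$ stay within the range where Lemma \ref{le:unique} applies. Once those are lined up correctly, both the uniqueness and the $\ndJ_1$-membership follow from the same ``two solutions force a contradiction'' mechanism, so the proof should be short.
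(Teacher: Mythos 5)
Your proposal is correct and follows essentially the same route as the paper: existence from the definition of $\ndJ_2$ (condition (2) of Lemma \ref{le:J}), uniqueness by invoking Lemma \ref{le:unique} to derive $\ndJ_2\cap[0,m]=\emptyset$ (or an outright contradiction when $p=2$), and the upgrade to $j_n\in\ndJ_1$ via Lemma \ref{le:J2}(2). Your explicit case split between $p=2$ and $p\ne 2$, and the alternative self-contained argument that $j_n\notin\ndJ_2$, are slightly more careful than the paper's terse wording but do not change the argument.
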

\begin{proof}
Assume that $n\in \ndJ_2$.
From the Lemma \ref{le:J} and Definition~\ref{def:J1J2}, we know that there exists $j_n\in \ndJ$ with $j_n<n$ such that
$q^{n+j_n-1}r^2-1=0$.
If there exists $ j_n'\in \ndJ$, with $n>j_n>j_n'$, satisfies $q^{n+j_n'-1}r^2-1=0$. Again using Lemma~\ref{le:unique} we obtain that $\ndJ_2\cap[0,m]=\emptyset$,
this is a contradiction.

Clearly, $j_n\in \ndJ_1$ by applying Lemma~\ref{le:J2}(2).
\end{proof}


\begin{lemm}\label{le:exist}
Let $k\in \ndN$.
Suppose that
$$w=\sum_{i=0}^{k-1}\mu_i(-q_{21})^i\dpu_i\dpu_{k-1-i},~\text{and}~~
\sum_{i=0}^{k-1}q^{-i(i+1)/2}\mu_i=0.$$
Then there exist a unique element  $v=\sum_{i=1}^{k-1}\lambda_i(-q_{21})^i\dpu_i\dpu_{k-i}\in U_k'$
such that
 $-q_{21}^{-1}d_1(v)=w.$
\end{lemm}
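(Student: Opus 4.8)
The plan is to turn the equation $-q_{21}^{-1}d_1(v)=w$ into a triangular linear recurrence for the coefficients $\lambda_i$ by feeding $v$ into Lemma \ref{le:dZ}, and then to recognize the hypothesis $\sum_{i=0}^{k-1}q^{-i(i+1)/2}\mu_i=0$ as exactly the compatibility condition that lets one solve the recurrence while keeping $v$ in the prescribed shape. First I would write $v=\sum_{i=0}^{k}\lambda_i(-q_{21})^i\dpu_i\dpu_{k-i}$ with the convention $\lambda_0=\lambda_k=0$, so that $v$ has the required form and lies in $U_k'$. Applying Lemma \ref{le:dZ} (which is available since $(k)_q^!b_k\ne0$ is in force) gives
$$-q_{21}^{-1}d_1(v)=\sum_{i=0}^{k-1}(\lambda_{i+1}-q^i\lambda_i)(-q_{21})^i\dpu_i\dpu_{k-1-i}.$$
Because $(k)_q^!b_k\ne0$ forces $(i)_q^!b_i\ne0$ for every $i\le k$, each $\dpu_i\dpu_{k-1-i}$ is a nonzero scalar multiple of $u_iu_{k-1-i}$, and the latter are linearly independent in $U_{k-1}$. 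Hence $-q_{21}^{-1}d_1(v)=w$ is equivalent to the system $\lambda_{i+1}-q^i\lambda_i=\mu_i$ for $i=0,1,\dots,k-1$, subject to the boundary conditions $\lambda_0=\lambda_k=0$.

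Next I would solve the recurrence. Starting from $\lambda_0=0$, the equations for $i=0,\dots,k-2$ determine $\lambda_1,\dots,\lambda_{k-1}$ uniquely, explicitly
$$\lambda_{i+1}=\sum_{t=0}^{i}q^{(i(i+1)-t(t+1))/2}\mu_t.$$
This closed form already delivers both the uniqueness of the coefficients (they are forced) and their existence, \emph{provided} the remaining equation, the case $i=k-1$, is consistent with the boundary value $\lambda_k=0$; that is, provided $q^{k-1}\lambda_{k-1}+\mu_{k-1}=0$.

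The crux of the argument, and the only place the hypothesis is used, is this final compatibility check. Substituting the closed form for $\lambda_{k-1}$ and simplifying the exponent via $k-1+\tfrac{(k-2)(k-1)}{2}=\tfrac{k(k-1)}{2}$, one computes
$$q^{k-1}\lambda_{k-1}+\mu_{k-1}=q^{k(k-1)/2}\sum_{t=0}^{k-1}q^{-t(t+1)/2}\mu_t,$$
which vanishes precisely by the assumed relation $\sum_{i=0}^{k-1}q^{-i(i+1)/2}\mu_i=0$. Thus the recurrence is solvable with $\lambda_k=0$, producing the desired $v$, and since the $\lambda_i$ are uniquely forced, the element $v$ is unique. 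I do not expect any conceptual obstacle beyond Lemma \ref{le:dZ} and the linear-independence observation; the only care needed is the bookkeeping of the boundary index and the exponent simplification, so that the compatibility condition lines up exactly with the hypothesis.
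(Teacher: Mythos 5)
Your proposal is correct and follows essentially the same route as the paper: write $v$ with the ansatz $\lambda_0=\lambda_k=0$, apply Lemma \ref{le:dZ} to reduce $-q_{21}^{-1}d_1(v)=w$ to the recurrence $\lambda_{i+1}-q^i\lambda_i=\mu_i$, solve it in closed form, and verify that the hypothesis $\sum_{i=0}^{k-1}q^{-i(i+1)/2}\mu_i=0$ is exactly the compatibility condition $\lambda_k=0$. Your added justification via linear independence of the $\dpu_i\dpu_{k-1-i}$ and the exponent simplification $q^{k-1}\lambda_{k-1}+\mu_{k-1}=q^{k(k-1)/2}\sum_t q^{-t(t+1)/2}\mu_t$ matches the paper's computation (which contains a harmless typo, $q^{k(k-1)}$ in place of $q^{k(k-1)/2}$).
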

\begin{proof}
Assume that $v=\sum_{i=0}^{k}\lambda_i(-q_{21})^i\dpu_i\dpu_{k-i}\in U_k',
~\lambda_0=0, ~\lambda_k=0$, then
\begin{align*}
-q_{21}^{-1}d_1(v)=\sum_{i=0}^{k-1}(\lambda_{i+1}-q^i\lambda_i)(-q_{21})^i\dpu_i\dpu_{k-1-i},
\end{align*}
because of Lemma~\ref{le:dZ}.
Thus
 $-q_{21}^{-1}d_1(v)=w$ if and only if
  $$\mu_i=\lambda_{i+1}-q^i\lambda_i, ~\text{for}~0\le i\le k-1.$$
 Hence
  $$\lambda_i=\sum_{j=0}^{i-1}q^{(i+j)(i-j-1)/2}\mu_j,~\text{for all $0\le i\le k$}.$$

We still need to prove that $\lambda_k=0$. While $\sum_{i=0}^{k-1}q^{-i(i+1)/2}\mu_i=0$ implies that
$$\lambda_k=\sum_{j=0}^{k-1}q^{(k+j)(k-j-1)/2}\mu_i
=q^{k(k-1)}\sum_{j=0}^{k-1}q^{-j(j+1)/2}\mu_i=0.$$

The uniqueness is clear. Thus we prove this lemma.
\end{proof}

We recall the definition of $Q_2^{k,m}\in\ndZ[q,r]$ from \cite[Lemma~2.3]{H2018root},
$$Q_2^{k,m}=
 \frac{q^{(2k+m)(m+1)/2}(-r)^{m+1}-1}{q^{2k+m}r^2-1}
 \prod_{i=0}^{m}(1-q^{k+i}r).$$
In this paper we view $Q_2^{k,m}$ as element in $\fie=\fie\ot_{\ndZ[q,r]}\ndZ[q,r]$ by identifying $q$ and $r$ in $\ndZ[q,r]$ with $q$ and $r$ in $\fie$, respectively.
\begin{lemm}\label{le:Q2vanish}
Assume that $p$ is a prime number. Let $j_n,n\in \ndN_0$ with $j_n<n$ such that $b_n\ne0$. Suppose that $j_n\in\ndJ_1$ and $q^{n+j_n-1}r^2=1$, then $n\in \ndJ_2$ if and only if $Q_2^{j_n,n-j_n-1}=0$.
\end{lemm}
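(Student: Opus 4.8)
The plan is to reduce the statement to a direct computation of $Q_2^{j_n,\,n-j_n-1}$ and to match the outcome against condition $(2)$ of Lemma~\ref{le:J}. Write $k=j_n$ and $m=n-j_n-1\ge 0$, so that $n=k+m+1$ and $2k+m=n+j_n-1$. The hypothesis $q^{n+j_n-1}r^2=1$ says precisely that the denominator $q^{2k+m}r^2-1$ of $Q_2^{k,m}$ vanishes at our parameters; hence $Q_2^{k,m}$ must be handled as a genuine element of $\ndZ[q,r]$, performing the cancellations there before specializing. I would first record that $b_n\ne 0$ forces each factor $1-q^{k+i}r$ with $0\le i\le m$ to be nonzero, since $\{k,\dots,k+m\}=\{j_n,\dots,n-1\}\subseteq\{0,\dots,n-1\}$; thus the product $\prod_{i=0}^m(1-q^{k+i}r)$ occurring in $Q_2^{k,m}$ is nonzero, as is the same product with any single factor deleted.

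The core of the argument is to rewrite the numerator $q^{(2k+m)(m+1)/2}(-r)^{m+1}-1=-\bigl(1-q^{(n-j_n)(n+j_n-1)/2}(-r)^{n-j_n}\bigr)$ using the two factorizations already established inside the proof of Lemma~\ref{le:J}, splitting on the parity of $n-j_n$. If $n-j_n$ is even, the factor $1-q^{n+j_n-1}r^2$ cancels against the denominator in $\ndZ[q,r]$, leaving $Q_2^{k,m}=\bigl(\tfrac{n-j_n}{2}\bigr)_{q^{n+j_n-1}r^2}\prod_{i=0}^m(1-q^{k+i}r)$. If $n-j_n$ is odd, I would write $1-q^{n+j_n-1}r^2=(1-q^{(n+j_n-1)/2}r)(1+q^{(n+j_n-1)/2}r)$; the factor $1+q^{(n+j_n-1)/2}r$ cancels the odd-case numerator factor, while the surviving denominator $1-q^{(n+j_n-1)/2}r$ is exactly the term of the product with index $i=m/2$ and cancels there, giving $Q_2^{k,m}=(n-j_n)_{-q^{(n+j_n-1)/2}r}\prod_{i\ne m/2}(1-q^{k+i}r)$. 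Specializing to our parameters, $q^{n+j_n-1}r^2=1$ forces $(q^{(n+j_n-1)/2}r)^2=1$, and since $1-q^{(n+j_n-1)/2}r\ne 0$ by $b_n\ne 0$ we must have $q^{(n+j_n-1)/2}r=-1$. Hence both quantum integers are evaluated at the parameter $1$, equalling $\tfrac{n-j_n}{2}\,1_\fie$ and $(n-j_n)\,1_\fie$ respectively, while the surviving products stay nonzero. Therefore $Q_2^{k,m}=0$ if and only if $p\mid\tfrac{n-j_n}{2}$ (when $n-j_n$ is even) or $p\mid(n-j_n)$ (when $n-j_n$ is odd).

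It remains to identify these conditions with membership in $\ndJ_2$. Reading condition $(2)$ of Lemma~\ref{le:J} with $j=n$ and witness $n'=j_n\in\ndJ$: in the even case it demands $q^{n+j_n-1}r^2=1$ (given) together with $2p\mid(n-j_n)$, which is equivalent to $p\mid\tfrac{n-j_n}{2}$; in the odd case it demands $q^{(n+j_n-1)/2}r=-1$ (automatic, as just shown) together with $p\mid(n-j_n)$. Thus $Q_2^{k,m}=0$ is equivalent to condition $(2)$ holding for $n$ with the witness $j_n$. Since $j_n\in\ndJ$, Lemma~\ref{le:J} promotes this to $n\in\ndJ$ and hence $n\in\ndJ_2$, which settles the implication $Q_2^{k,m}=0\Rightarrow n\in\ndJ_2$.

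For the converse I must ensure the witness realizing $n\in\ndJ_2$ may be taken to be $j_n$: any witness $n'$ gives $q^{n+n'-1}r^2=1=q^{n+j_n-1}r^2$, whence $q^{n'-j_n}=1$, and I would invoke Lemma~\ref{le:unique2} (with Lemma~\ref{le:unique} covering the degenerate parameters $q=1,\ r=-1$, where $\ndJ_1$ collapses so that $j_n$ is forced) to conclude $n'=j_n$, and therefore $Q_2^{k,m}=0$. I expect the evaluation at the vanishing denominator to be the main obstacle: the cancellations must be carried out correctly inside $\ndZ[q,r]$ in both parity cases, and the key observation that in the odd case the leftover denominator is precisely the $i=m/2$ term of the product is what makes $Q_2^{k,m}$ emerge as a nonzero multiple of a single quantum integer. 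The identification with condition $(2)$ and the uniqueness of the witness are then comparatively routine.
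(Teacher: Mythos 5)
Your computation of $Q_2^{j_n,n-j_n-1}$ is correct and is exactly the paper's argument: the same parity split, the same cancellation against the vanishing denominator performed in $\ndZ[q,r]$ (in the odd case the surviving denominator factor $1-q^{(n+j_n-1)/2}r$ cancelling the middle term $i=m/2$ of the product, and $b_n\ne 0$ forcing $q^{(n+j_n-1)/2}r=-1$), yielding that $Q_2^{j_n,n-j_n-1}=0$ if and only if $p\mid\tfrac{n-j_n}{2}$ (even case) resp. $p\mid(n-j_n)$ (odd case), the leftover product being nonzero because all its factors divide $b_n$. The implication $Q_2^{j_n,n-j_n-1}=0\Rightarrow n\in\ndJ_2$, via condition (2) of Lemma \ref{le:J} with witness $j_n$, also matches the paper.

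The gap is in your converse. You reduce $n\in\ndJ_2\Rightarrow Q_2^{j_n,n-j_n-1}=0$ to showing that the witness realizing condition (2) must be $j_n$, and for this you invoke Lemmas \ref{le:unique2} and \ref{le:unique}. But both of those lemmas assume $(m)_q^!b_m\ne 0$, while the present statement only grants $b_n\ne 0$; without $(n)_q^!\ne 0$ the relation $q^{n'-j_n}=1$ you derive does not force $n'=j_n$ (it can fail when $q$ is a root of unity of order at most $n$), and divisibility of $n-n'$ by $p$ or $2p$ does not then transfer to $n-j_n$. The repair is that no witness analysis is needed at all: since $n\in\ndJ_2\subseteq\ndJ$ and $j_n\in\ndJ_1\subseteq\ndJ$ with $j_n<n$, Definition \ref{def:J} already requires condition \eqref{de:condi} to hold for the pair $(n,j_n)$ itself (the definition quantifies over \emph{all} smaller elements of $\ndJ$, not over one witness); evaluating \eqref{de:condi} at $q^{n+j_n-1}r^2=1$, and at $q^{(n+j_n-1)/2}r=-1$ in the odd case, gives exactly $2p\mid(n-j_n)$, resp. $p\mid(n-j_n)$, hence $Q_2^{j_n,n-j_n-1}=0$ by your computation. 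This is in effect what the paper's short phrase ``that is, $n\in\ndJ_2$ applying Lemma \ref{le:J}'' amounts to.
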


\begin{proof}
Assume that $q^{n+j_n-1}r^2=1$.

 If $n-j_n$ is even, then
 \begin{align*}
		Q_2^{j_n,n-j_n-1}&=\sum_{i=0}^{(n-j_n-2)/2}(q^{n+j_n-1}r^2)^i
		\prod_{i=0}^{n-j_n-1}(1-q^{j_n+i}r)\\
        &=\frac{n-j_n}2\prod_{i=0}^{n-j_n-1}(1-q^{j_n+i}r).
	\end{align*}
Since $b_n\ne0$, we get $Q_2^{j_n,n-j_n-1}=0$ if and only if $2p|(n-j_n)$, that is, $n\in \ndJ_2$ applying Lemma \ref{le:J}.

If $n-j_n$ is odd. Since $b_n\ne0$ we get $q^{(n+j_n-1)/2}r=-1$. Let us write $k=(n-j_n-1)/2$, then $q^{j_n+k}r=-1$. Hence
\begin{align*}
	  Q_2^{j_n,n-j_n-1}&=\sum_{i=0}^{n-j_n-1}(-q^{j_n+k}r)^i
		\prod_{i=0}^{k-1}(1-q^{j_n+i}r)\prod_{i=k+1}^{2k}(1-q^{j_n+i}r)\\
     &=(n-j_n)\prod_{i=0}^{k-1}(1-q^{j_n+i}r)\prod_{i=k+1}^{2k}(1-q^{j_n+i}r)
	\end{align*}
Again since $b_n\ne0$, we conclude that $Q_2^{j_n,n-j_n-1}=0$ if and only if $p|(n-j_n)$, again we get $n\in \ndJ_2$.

Thus we prove this lemma.
\end{proof}

\begin{lemm}\label{le:existence}
Assume that $p$ is a prime number.
Let $n\in \ndN_0$ such that that $(n)_q^!b_n\ne0$. Suppose that $n\in \ndJ_2$, and that $j_n\in \ndJ_1$ with $j_n<n$ such that $q^{n+j_n-1}r^2=1$.
Then there exits a unique element $L_n\in U'_{n}\cap \ker(\pi)$ such that $$-q_{21}^{-1}d_1(L_n)=(\ad x_1)^{n-j_n-1}(P_{j_n}).$$
\end{lemm}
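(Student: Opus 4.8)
The plan is to construct $L_n$ explicitly as an element of $U_n'$ using Lemma~\ref{le:exist}, then verify separately that the constructed element lies in $\ker(\pi)$. First I would set $w=(\ad x_1)^{n-j_n-1}(P_{j_n})$ and observe that, by Lemma~\ref{ad pi} applied with $k=j_n$ and $m=n-j_n-1$, this element expands as a linear combination $\sum_{i=0}^{n-1}\mu_i(-q_{21})^i\dpu_i\dpu_{n-1-i}$ lying in $U_{n-1}$ (after the appropriate rescaling by $q_{12}^{n-j_n-1}$). To invoke Lemma~\ref{le:exist} and obtain a unique preimage $v\in U_n'$ with $-q_{21}^{-1}d_1(v)=w$, I must check the compatibility condition $\sum_{i=0}^{n-1}q^{-i(i+1)/2}\mu_i=0$. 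I expect this scalar identity to be exactly the place where the hypotheses $q^{n+j_n-1}r^2=1$ and $n\in\ndJ_2$ enter: the sum $\sum_i q^{-i(i+1)/2}\mu_i$ should simplify, via the explicit formula for $\su(k,t)$ in Definition~\ref{de:Skt} and the $\lambda,\beta$ coefficients in Lemma~\ref{ad pi}, to a multiple of a vanishing factor, most plausibly $Q_2^{j_n,n-j_n-1}$, which is zero precisely by Lemma~\ref{le:Q2vanish} under the stated assumptions.

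Once the compatibility condition is verified, Lemma~\ref{le:exist} produces a unique $v=L_n\in U_n'$ with the required derivation identity, and uniqueness of $L_n$ is immediate from the uniqueness clause of that lemma. It remains to show $L_n\in\ker(\pi)$, i.e.\ that $L_n=0$ in $\NA(V)$. By the Remark following the skew-derivations, a homogeneous element of positive degree vanishes in $\NA(V)$ if and only if both $d_1$ and $d_2$ annihilate it in $\NA(V)$. I would compute $d_1(L_n)$ and $d_2(L_n)$ using Lemma~\ref{le:dZ} (with $L_n$ written in the normalized basis $\dpu_i\dpu_{n-i}$) and show each maps into $\ker(\pi)$. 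For $d_1(L_n)=-q_{21}(\ad x_1)^{n-j_n-1}(P_{j_n})$, vanishing in $\NA(V)$ follows because $j_n\in\ndJ_1\subseteq\ndJ$ forces $P_{j_n}=0$ in $\NA(V)$ by Lemma~\ref{d(Pk)} (the condition $q^{j_n(j_n-1)/2}(-r)^{j_n}s=-1$ holds for $j_n\in\ndJ$), and $\ad x_1$ preserves $\ker(\pi)$ since $\pi$ is an algebra map. For $d_2$, the formula in Lemma~\ref{le:dZ} expresses $d_2(L_n)$ as a scalar multiple of $\dpu_{n-1}$, and I would argue this scalar vanishes, again tracing it back to the defining relation $q^{n+j_n-1}r^2=1$.

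The main obstacle I anticipate is the verification of the compatibility hypothesis $\sum_{i=0}^{n-1}q^{-i(i+1)/2}\mu_i=0$: this requires pinning down the coefficients $\mu_i$ of $(\ad x_1)^{n-j_n-1}(P_{j_n})$ precisely (via Lemma~\ref{ad pi}), summing them against the weights $q^{-i(i+1)/2}$, and recognizing the resulting expression as proportional to a quantity known to vanish. The bookkeeping with the nested products $\lambda_{(n,k)},\beta_{(i,m,k)}$ and the $q$-binomial coefficients in $\su(k+m,i)$ is delicate, and the even/odd split of $n-j_n$ seen in Lemma~\ref{le:Q2vanish} will likely reappear here. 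A secondary subtlety is confirming that $d_2(L_n)$ vanishes in $\NA(V)$ rather than merely in $T(V)$; here I would lean on the structure of Lemma~\ref{le:411}, which connects exactly this kind of preimage-under-$d_1$ situation to the vanishing of $Q_2^{k,m}$, providing the consistency needed to close the argument.
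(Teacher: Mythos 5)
Your proposal is correct and follows essentially the same route as the paper: reduce existence and uniqueness to the compatibility condition of Lemma~\ref{le:exist}, expand $(\ad x_1)^{n-j_n-1}(P_{j_n})$ via Lemma~\ref{ad pi}, identify the weighted coefficient sum as a multiple of $Q_2^{j_n,n-j_n-1}$ (the paper likewise defers this bookkeeping to the computation in the proof of Lemma~4.11 of \cite{H2018root}), kill it with Lemma~\ref{le:Q2vanish}, and conclude $L_n\in\ker(\pi)$ from the derivation criterion together with $P_{j_n}=0$ in $\NA(V)$. One minor correction: $d_2(L_n)$ is a multiple of $\dpu_n$ (not $\dpu_{n-1}$), and its vanishing needs no appeal to $q^{n+j_n-1}r^2=1$ --- it is automatic because the element produced by Lemma~\ref{le:exist} has zero coefficients at $i=0$ and $i=n$, so the scalar $\lambda_0+(-r)^ns\lambda_n$ in Lemma~\ref{le:dZ} is zero on the nose.
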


\begin{proof}
Assume that there exists an element $L_n\in U'_{n}$ such that
$$-q_{21}^{-1}d_1(L_n)=(\ad x_1)^{n-j_n-1}(P_{j_n}),$$
it follows from $d_2(L_n)=0$ and
$(\ad x_1)^{n-j_n-1}(P_{j_n})=0$ in $\NA(V)$ that $L_n\in \ker(\pi)$.

Let now $(\ad x_1)^{n-j_n-1}(P_{j_n})=\sum_{i=0}^{n-1}\mu_i(-q_{21})^i\dpu_i\dpu_{n-1-i}$.
It is enough to prove $\sum_{i=0}^{n-1}q^{-i(i+1)/2}\mu_i=0$ by Lemma \ref{le:exist}. Let us write $m=n-j_n-1$ and $H=\sum_{i=0}^{n-1}q^{-i(i+1)/2}\mu_i$.
Then
\begin{align*}
&H
 =q_{12}^{m}\sum_{i=0}^{m} \frac{(m)_q^!}{(m-i)_q^!}
\lambda_{(m-i,j_n)} \beta_{(i,m,j_n)}
\sum_{j=i}^{n-1}q^{(j-i)(j-i-1)/2}q^{-j(j+1)/2}{j\choose i}_{\!\!q}
\end{align*}



By applying the similar discussions in the proof of \cite[Lemma ~4.11]{H2018root} for $m=n-j_n-1$, $k=j_n$,  we conclude that
$$H=q_{12}^{m}q^{-(n-j_n)(n+j_n-1)/2}{n\choose j_n}_{\!\!q}(-r)^{-m}Q_2^{j_n,n-j_n-1}$$
Thus $H=0$ by Lemma \ref{le:Q2vanish}.
\end{proof}

\begin{prop}\label{prop:linindep}
Assume that $m\in \ndN_0$ and that $(m)_q^!b_m\ne0$. The elements
$$(\ad x_1)^{m-j}(P_j) ~\text{for} ~j\in \ndJ_1$$
are linearly independent in $\ker(\pi)\cap U_m$.
\end{prop}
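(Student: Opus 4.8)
The plan is to prove linear independence by applying the skew-derivation $d_1$ and inducting on the exponents. The elements $(\ad x_1)^{m-j}(P_j)$ all lie in $U_m\cap\ker(\pi)$: they lie in $\ker(\pi)$ because $j\in\ndJ_1\subseteq\ndJ$ forces $q^{j(j-1)/2}(-r)^js=-1$, so $P_j=0$ in $\NA(V)$ by Lemma \ref{d(Pk)}, and $\ad x_1$ preserves the kernel of the bialgebra map $\pi$. Suppose we have a linear dependence
\begin{align*}
\sum_{j\in\ndJ_1\cap[0,m]}c_j\,(\ad x_1)^{m-j}(P_j)=0,\qquad c_j\in\fie .
\end{align*}
I want to show every $c_j=0$.

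First I would order $\ndJ_1\cap[0,m]=\{j_1<j_2<\cdots<j_\ell\}$ and argue by applying $d_1$ repeatedly to isolate the top term. The key computational tool is Lemma \ref{le:d1ad}, which tells us how $d_1$ interacts with $(\ad x_1)^{m-j}$: since each $P_j$ satisfies $d_1(P_j)=0$ in $T(V)$ (this follows from Lemma \ref{le:dZ} applied with the coefficients defining $P_k$, using $j\in\ndJ$ hence $\lambda_0+(-r)^js\lambda_j=1+q^{j(j-1)/2}(-r)^js=0$), the first term in Lemma \ref{le:d1ad} vanishes and we get
\begin{align*}
d_1\big((\ad x_1)^{m-j}(P_j)\big)=(m-j)_q\big(1-q^{m-j-1}\chi(\alpha_1,\beta_j)\chi(\beta_j,\alpha_1)\big)(\ad x_1)^{m-j-1}(P_j),
\end{align*}
where $\beta_j=\deg(P_j)=j\alpha_1+2\alpha_2$. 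The scalar here is $(m-j)_q(1-q^{m-j-1}\cdot q^{2j}r^2)=(m-j)_q(1-q^{m+j-1}r^2)$. Because $(m)_q^!b_m\neq 0$ and $m-j\le m$, the factor $(m-j)_q\ne 0$; and by the $\ndJ_1$-condition of Lemma \ref{le:J2}(1) together with Lemma \ref{le:unique}, the factor $1-q^{m+j-1}r^2$ is nonzero for the relevant $j$, so $d_1$ lowers the $\ad x_1$-exponent by exactly one while staying within the span of the $(\ad x_1)^{m-j-1}(P_j)$. The main obstacle is organizing this bookkeeping so that after applying $d_1$ a controlled number of times the highest-index term $(\ad x_1)^{m-j_\ell}(P_{j_\ell})$ is reduced to a nonzero multiple of $P_{j_\ell}$ while all lower terms are sent to genuinely lower-degree elements, letting me peel off coefficients one at a time.

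Concretely, I would apply $d_1$ exactly $m-j_\ell$ times to the dependence relation. Each application multiplies the surviving terms by nonzero scalars of the form above and decrements every exponent, so the $j_\ell$-term becomes $c_{j_\ell}\cdot(\text{nonzero})\cdot P_{j_\ell}$, living in $U_{j_\ell}$, whereas every term with index $j<j_\ell$ is still of the form $(\ad x_1)^{>0}(P_j)\in U_{>j_\ell}'$ or has been driven to lower degree; projecting onto the appropriate graded/$U$-component and using that $\{u_iu_{k-i}\}$ is a basis of $U_k$ forces $c_{j_\ell}=0$. The cleanest way to separate components is to track the $\dpu_0\dpu_{j}$-type leading coefficient, exploiting that $P_j$ has a nonzero $\dpu_0\dpu_j$ term while $(\ad x_1)^{>0}(P_j)$ does not. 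Iterating downward from $j_\ell$ to $j_1$ then kills all coefficients. I expect the genuine difficulty to lie not in the $d_1$-calculus, which is routine given Lemmas \ref{le:d1ad} and \ref{le:dZ}, but in verifying that every scalar factor $1-q^{m+j-1}r^2$ and $(m-j)_q$ that I divide by is nonzero—this is exactly where the hypotheses $(m)_q^!b_m\ne 0$, the definition of $\ndJ_1$ via Lemma \ref{le:J2}, and the uniqueness statements of Lemma \ref{le:unique} must be invoked carefully, since a single vanishing factor would collapse the induction.
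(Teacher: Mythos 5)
Your proposal follows the right general outline (apply $d_1$ via Lemma \ref{le:d1ad}, use that $d_1(P_j)=0$ in $T(V)$), but it has a genuine gap at its central step: the claim that every factor $1-q^{m'+j-1}r^2$ encountered during the iteration is nonzero. The defining property of $\ndJ_1$ (Lemma \ref{le:J2}(1)) only forbids $q^{n+j-1}r^2=1$ when \emph{both} $n$ and $j$ lie in $\ndJ$; it says nothing about the intermediate degrees $m'$ with $j<m'\le m$, $m'\notin\ndJ$, that you pass through when applying $d_1$ the $m-j_\ell$ times, and Lemma \ref{le:unique} does not exclude such a vanishing either (it only says the vanishing spot is essentially unique, and a single vanishing factor already breaks your product). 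Concretely, take $r\in\fie^\times$ not a root of unity, $q=r^{-2}$, $s=-1$. Then $(m)_q^!b_m\ne0$ for all $m$, $\ndJ=\ndJ_1=\{0\}$, and by Lemma \ref{le:d1ad}, $d_1\big((\ad x_1)^2(P_0)\big)=(2)_q(1-qr^2)(\ad x_1)(P_0)=0$. So for $m\ge 2$ your iteration applied to $c_0(\ad x_1)^m(P_0)=0$ collapses to $0=0$ at degree $2$ and yields no information about $c_0$; yet the proposition asserts $(\ad x_1)^m(P_0)\ne 0$. This vanishing of $1-q^{m'+j-1}r^2$ at degrees outside $\ndJ$ is not a marginal nuisance: in positive characteristic it is precisely the mechanism that creates $\ndJ_2$ and the elements $L_n$, so it cannot be assumed away. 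The paper's proof (a citation of \cite[Theorem~4.16]{H2018root}, whose mechanism is also visible in the proof of Proposition \ref{prop:independ}) avoids the problem by inducting on $m$ with a case split: if $m\notin\ndJ$ one applies no $d_1$ at all and instead strips one $\ad x_1$ using the injectivity of $\ad x_1:U_{m-1}\to U_m$ (Lemma \ref{le:adxinjective}); $d_1$ is applied only once per inductive step, and only when $m\in\ndJ_1$ — and then membership of $m$ itself in $\ndJ_1$ is exactly the statement that $1-q^{m+j-1}r^2\ne0$ for all $j\in\ndJ$ with $j<m$.

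Your fallback separation step also rests on a false assertion. By Lemma \ref{ad pi}, the $i=0$ summand of $q_{12}^{-m''}(\ad x_1)^{m''}(P_j)$ is $\lambda_{(m'',j)}P_{j+m''}$, so $(\ad x_1)^{m''}(P_j)$ with $m''>0$ has $\dpu_0\dpu_{j+m''}$-coefficient $q_{12}^{m''}\prod_{t=1}^{m''}(1-q^{j+t-1}r)(j+t)_q$, which is \emph{nonzero} under the standing hypotheses (for instance $(\ad x_1)(P_0)=u_1u_0+q_{12}u_0u_1$); so projecting onto the $\dpu_0\dpu_{j_\ell}$-component mixes all coefficients and does not isolate $c_{j_\ell}$. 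Relatedly, after $m-j_\ell$ applications of $d_1$ \emph{all} surviving terms lie in $U_{j_\ell}$ (the derivation $d_1$ is homogeneous of degree $-\alpha_1$), so the terms with $j<j_\ell$ are not "in lower degree"; separating them from $P_{j_\ell}$ amounts to the linear independence one is trying to prove, which is why a properly organized induction is needed rather than a one-shot projection. Finally, a minor slip: your justification of $d_1(P_j)=0$ in $T(V)$ cites the wrong part of Lemma \ref{le:dZ} — the identity $\lambda_0+(-r)^js\lambda_j=0$ controls $d_2$ (hence $P_j\in\ker\pi$, via Lemma \ref{d(Pk)}), whereas $d_1(P_j)=0$ holds unconditionally because $\lambda_{i+1}-q^i\lambda_i=0$ for the coefficients $\lambda_i=q^{i(i-1)/2}$.
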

\begin{proof}
It was proved in \cite[Theorem 4.16]{H2018root}.
\end{proof}

\begin{prop}\label{prop:independ}
Suppose that $m\in \ndN_0$ and that $(m)_q^!b_m\ne0$. The elements
$$(\ad x_1)^{m-j}(P_j), ~(\ad x_1)^{m-n}(L_n),~ \text{for} ~j\in \ndJ_1\cap[0,m] ~\text{and}~ n\in \ndJ_2\cap[0,m],$$
are linearly independent in $\ker(\pi)\cap U_m$.
\end{prop}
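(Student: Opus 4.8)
The plan is to induct on $m$, using the skew-derivation $d_1$ to push a relation at level $m$ down to level $m-1$ and then to conclude with the already-established linear independence of the $P$-part, Proposition~\ref{prop:linindep}. Observe first that all the listed elements lie in $\ker(\pi)\cap U_m$: for $j\in\ndJ_1\subseteq\ndJ$ Lemma~\ref{d(Pk)} gives $P_j=0$ in $\NA(V)$, while $L_n\in U_n'\cap\ker(\pi)$ by Lemma~\ref{le:existence}, and $\ad x_1$ sends $U_k\cap\ker(\pi)$ into $U_{k+1}\cap\ker(\pi)$. If $\ndJ_2\cap[0,m]=\emptyset$ — which covers $m=0$ and, by Remark~\ref{re:p=0}, the whole case $p=0$, and serves as the base of the induction — the family reduces to $(\ad x_1)^{m-j}(P_j)$, $j\in\ndJ_1\cap[0,m]$, and Proposition~\ref{prop:linindep} finishes it. So assume $\ndJ_2\cap[0,m]\ne\emptyset$. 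A short argument as in the proof of Lemma~\ref{le:unique}(2) then forces $q\ne1$: were $q=1$, then $q^{n+j_n-1}r^2=1$ with $b_n\ne0$ would give $r=-1$ and $p$ odd, and Lemma~\ref{le:J2}(2) would yield $p\mid(n-j_n)$ with $p\le m$, contradicting $(m)_q^!\ne0$. Hence $q^k\ne1$ and $(k)_q\ne0$ for $1\le k\le m$.

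Next I would record the two derivative identities that drive the argument. Since $P_j$ has coefficients $\lambda_i=q^{i(i-1)/2}$ with $\lambda_{i+1}-q^i\lambda_i=0$, Lemma~\ref{le:dZ} gives $d_1(P_j)=0$; combining this with Lemma~\ref{le:d1ad} and $\chi(\alpha_1,\alpha)\chi(\alpha,\alpha_1)=q^{2j}r^2$ for $\alpha=\deg P_j$ gives
\begin{align*}
 d_1\big((\ad x_1)^{m-j}(P_j)\big)=(m-j)_q(1-q^{m+j-1}r^2)(\ad x_1)^{m-j-1}(P_j);
\end{align*}
and from $d_1(L_n)=-q_{21}(\ad x_1)^{n-j_n-1}(P_{j_n})$ (Lemma~\ref{le:existence}) together with Lemma~\ref{le:d1ad},
\begin{align*}
 d_1\big((\ad x_1)^{m-n}(L_n)\big)={}&-q_{21}q^{m-n}(\ad x_1)^{m-j_n-1}(P_{j_n})\\
 &+(m-n)_q(1-q^{m+n-1}r^2)(\ad x_1)^{m-n-1}(L_n),
\end{align*}
where $j_n\in\ndJ_1$, $j_n<n$, is the unique index with $q^{n+j_n-1}r^2=1$ supplied by Lemma~\ref{le:unique2}.

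Now suppose $\sum_{j\in\ndJ_1\cap[0,m]}a_j(\ad x_1)^{m-j}(P_j)+\sum_{n\in\ndJ_2\cap[0,m]}c_n(\ad x_1)^{m-n}(L_n)=0$ in $T(V)$ and apply $d_1$. By the two identities, $d_1$ of this expression is a combination of the level-$(m-1)$ elements $(\ad x_1)^{m-1-j}(P_j)$, $j\in\ndJ_1\cap[0,m-1]$, and $(\ad x_1)^{m-1-n}(L_n)$, $n\in\ndJ_2\cap[0,m-1]$: the $P_m$- and $L_m$-contributions drop out because their coefficients carry the factor $(0)_q=0$, whereas $c_m$ (if $m\in\ndJ_2$) still feeds the $P_{j_m}$-coefficient through $-q_{21}q^{m-m}c_m$. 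Since $(m-1)_q^!b_{m-1}\ne0$, the inductive hypothesis makes these level-$(m-1)$ elements linearly independent, so every coefficient of $d_1$ of the expression must vanish. The coefficient of $(\ad x_1)^{m-1-n}(L_n)$ equals $c_n(m-n)_q(1-q^{m+n-1}r^2)$, where $(m-n)_q\ne0$ and, using $q^{n+j_n-1}r^2=1$, $1-q^{m+n-1}r^2=1-q^{m-j_n}\ne0$ because $1\le m-j_n\le m$ and $q\ne1$; hence $c_n=0$ for every $n\in\ndJ_2\cap[0,m-1]$.

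It remains to kill $c_m$ and all the $a_j$. When $m\in\ndJ_2$, the index $n=m$ is, by injectivity of $n\mapsto j_n$ (which again uses $q\ne1$), the only element of $\ndJ_2\cap[0,m]$ mapping to $j_m$, and the $a_{j_m}$-contribution to the $P_{j_m}$-coefficient vanishes because $q^{m+j_m-1}r^2=1$; that coefficient therefore equals $-q_{21}c_m$, forcing $c_m=0$. With all $c_n=0$ the original relation collapses to $\sum_{j\in\ndJ_1\cap[0,m]}a_j(\ad x_1)^{m-j}(P_j)=0$, and Proposition~\ref{prop:linindep} gives $a_j=0$ for all $j$, completing the induction. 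The step I expect to be the real obstacle is exactly this bookkeeping under $d_1$: identifying which $L_n$- and $P_j$-terms collide, and treating the boundary index $n=m$, where the $L_m$-term disappears but $c_m$ has to be recovered from the $P_{j_m}$-coefficient — both points resting on $q\ne1$ and the uniqueness statements of Lemmas~\ref{le:unique}--\ref{le:unique2}.
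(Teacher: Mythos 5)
Your proof is correct and follows essentially the same route as the paper's: induction on $m$, applying $d_1$ together with Lemmas \ref{le:d1ad}, \ref{le:dZ}, \ref{le:existence} to push the relation down to level $m-1$, killing the $L$-coefficients via the non-vanishing of $(m-n)_q(1-q^{m+n-1}r^2)$ (which you justify directly from $q^{m+n-1}r^2=q^{m-j_n}$ and Lemma \ref{le:unique2}, where the paper cites Lemma \ref{le:J} resp.\ Lemma \ref{le:unique2} in separate cases), and finishing with Proposition \ref{prop:linindep}. The only differences are organizational: you treat all cases uniformly where the paper splits into $m\notin\ndJ$, $m\in\ndJ_1$, $m\in\ndJ_2$, and you invoke Proposition \ref{prop:linindep} at level $m$ on the collapsed relation, which disposes of the coefficient of $(\ad x_1)^{m-j_m}(P_{j_m})$ somewhat more explicitly than the paper's level-$(m-1)$ argument does.
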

\begin{proof}
If $p=0$, then $\ndJ_2=\emptyset$, the claim follows from Proposition \ref{prop:linindep}.

Assume now $p$ is a prime number.
We proceed this claim by induction on $m$.

This is clear for $m=0$.
Assume now that $m\ge1$ and for any $j\in \ndJ\cap[0,m]$ there is
$\lambda_j\in\fie$ such that
$$\sum_{n\in \ndJ_2\cap[0,m]}\lambda_{n}(\ad x_1)^{m-n}(L_n)+
\sum_{j\in \ndJ_1\cap[0,m]}\lambda_{j}(\ad x_1)^{m-j}(P_j)=0.$$

If $m\notin \ndJ_1\cup\ndJ_2$, then
$$\sum_{n\in \ndJ_2\cap[0,m-1]}\lambda_{n}(\ad x_1)^{m-1-n}(L_n)+
\sum_{j\in \ndJ_1\cap[0,m-1]}\lambda_{j}(\ad x_1)^{m-1-j}(P_j)=0,$$
as $\ad x_1|_{U_{m-1}}$ is injective by Lemma~\ref{le:adxinjective}.
By induction hypothesis, we obtain that $\lambda_{j}=0$ for any $j\in \ndJ\cap[0,m]$.

Assume now $m\in \ndJ_1$, then $m\notin \ndJ_2$ and $d_1(P_m)=0$.
And then
$$\sum_{n\in \ndJ_2\cap[0,m-1]}\lambda_{n}d_1((\ad x_1)^{m-n}(L_n))+
\sum_{j\in \ndJ_1\cap[0,m-1]}\lambda_{j}d_1((\ad x_1)^{m-j}(P_j))=0.$$

From Lemma \ref{le:d1ad} we have that,
\begin{align*}
&\sum_{n\in \ndJ_2\cap[0,m-1]}\lambda_{n}(-q_{21})q^{m-n}(\ad x_1)^{m-1-j_n}(P_{j_n})\\
 &  + \sum_{n\in \ndJ_2\cap[0,m-1]}\lambda_{n}(m-n)_q(1-q^{m+n-1}r^2)(\ad x_1)^{m-1-n}(L_n)\\
&+\sum_{j\in \ndJ_1\cap[0,m-1]}\lambda_{j}(m-j)_q(1-q^{m+j-1}r^2)(\ad x_1)^{m-1-j}(P_j)=0.
\end{align*}
 Lemma~\ref{le:J} and $m\in \ndJ_1$ imply that $(m-n)_q(1-q^{m+n-1}r^2)\ne0$ for any $n\in \ndJ_2\cap[0,m-1]$, by induction hypothesis we obtain that
$\lambda_{n}=0$, for any $n\in \ndJ_2\cap[0,m-1]$.
Then $\lambda_{j}=0$ for any $j\in\ndJ_1$ because of Proposition~\ref{prop:linindep}.

If $m\in \ndJ_2$, then $m\notin \ndJ_1$ and then there exists a unique $j_m\in \ndJ_1\cap[0,m]$ such that $q^{m+j_m-1}r^2-1=0$,
then

\begin{align}
\label{eq:3}
&\sum_{n\in \ndJ_2\cap[0,m]}\lambda_{n}(-q_{21})q^{m-n}(\ad x_1)^{m-1-j_n}(P_{j_n})\\
\label{eq:4}
 &+ \sum_{n\in \ndJ_2\cap[0,m-1]}\lambda_{n}(m-n)_q(1-q^{m+n-1}r^2)
 (\ad x_1)^{m-1-n}(L_n)\\
&+\sum_{j\in \ndJ_1\cap[0,m-1],j\ne j_m}\lambda_{j}(m-j)_q(1-q^{m+j-1}r^2)(\ad x_1)^{m-1-j}(P_j)=0
\end{align}

Due to Lemma~\ref{le:unique2}, we get that $(m-n)_q(1-q^{m+n-1}r^2)\ne0$ for any $n\in \ndJ_2\cap[0,m-1]$. Then
$\lambda_{n}=0$, for any $n\in \ndJ_2\cap[0,m-1]$ by induction hypothesis. This forces formula \eqref{eq:3} to be $-q_{21}\lambda_m(\ad x_1)^{m-1-j_m}(P_{j_m})$ and formula \eqref{eq:4} to be zero. Further more,
Lemma~\ref{le:unique2} and $(m)_q^!\ne0$ imply that $(m-j)_q(1-q^{m+j-1}r^2)\ne0$ for any $j\in \ndJ_1\cap [0,m]$ and $j\ne j_m$.
It follows from Proposition \ref{prop:linindep} that
$\lambda_j=0$ for any $j\in \ndJ_1\cap[0,m]$ with $j\ne j_m$ and $\lambda_{m}=0$. Therefore $\lambda_j=0$ for any $j\in \ndJ\cap[0,m]$.

This finishes the proof of this proposition.
\end{proof}

\begin{theo}\label{theo:main}
Suppose that $m\in \ndN_0$ and that $(m)_q^!b_m\ne0$. The elements
$$(\ad x_1)^{m-j}(P_j), ~(\ad x_1)^{m-n}(L_n),~ \text{for} ~j\in \ndJ_1 ~\text{and}~ n\in \ndJ_2,$$
form a basis of $\ker(\pi)\cap U_m$.
\end{theo}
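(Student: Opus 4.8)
The plan is to derive the theorem from the linear independence already established in Proposition~\ref{prop:independ} by proving the dimension equality
\[
\dim_{\fie}\big(\ker(\pi)\cap U_m\big)=|\ndJ\cap[0,m]|,
\]
where the indices $j,n$ are tacitly restricted to $\ndJ_1\cap[0,m]$ and $\ndJ_2\cap[0,m]$ (finite by Lemma~\ref{le:upbound}). Since the $|\ndJ\cap[0,m]|$ listed vectors are linearly independent in $\ker(\pi)\cap U_m$, this equality forces them to be a basis. I would argue by induction on $m$, the case $m=0$ being Example~\ref{ex:smallnumb}(1) (here $U_0=\fie x_2^2$ and $x_2^2=0$ in $\NA(V)$ iff $s=-1$ iff $0\in\ndJ$). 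Write $K_m=\ker(\pi)\cap U_m$.

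The inductive step is organised around the two skew-derivations. In the basis $v=\sum_{i=0}^{m}\lambda_i(-q_{21})^i\dpu_i\dpu_{m-i}$ of $U_m$, Lemma~\ref{le:dZ} gives $d_2(v)=(\lambda_0+(-r)^ms\lambda_m)\dpu_m$ and shows that $d_1\colon U_m\to U_{m-1}$ is surjective with one-dimensional kernel $\fie P_m$ (the locus $\lambda_{i+1}=q^i\lambda_i$). Since a homogeneous element of positive degree vanishes in $\NA(V)$ exactly when both $d_1$ and $d_2$ annihilate it, and since $\dpu_m\neq0$ in $\NA(V)$ (as $(m)_q^!b_m\neq0$), an element $v\in U_m$ lies in $K_m$ iff $d_1(v)\in K_{m-1}$ and $\lambda_0+(-r)^ms\lambda_m=0$. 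Hence $d_1$ restricts to $K_m\to K_{m-1}$ and
\[
\dim K_m=\dim\big(\fie P_m\cap K_m\big)+\dim d_1(K_m),\qquad d_1(K_m)\subseteq K_{m-1}.
\]
By Lemma~\ref{d(Pk)} the first summand is $1$ precisely when $q^{m(m-1)/2}(-r)^ms=-1$ (i.e. $P_m\in K_m$), and $0$ otherwise.

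If $q^{m(m-1)/2}(-r)^ms\neq-1$, then $m\notin\ndJ$, and since the $d_2$-value of $P_m$, namely $1+q^{m(m-1)/2}(-r)^ms$, is nonzero, one can correct the $d_2$-value of any $d_1$-preimage by a multiple of $P_m$; thus $d_1(K_m)=K_{m-1}$ and $\dim K_m=\dim K_{m-1}$, as wanted. Suppose instead $q^{m(m-1)/2}(-r)^ms=-1$, so $P_m\in K_m$ contributes $1$ and the point is whether $d_1\colon K_m\to K_{m-1}$ is onto. From Lemma~\ref{le:d1ad} (using $d_1(P_j)=0$) one has $d_1\big((\ad x_1)^{m-j}(P_j)\big)=(m-j)_q(1-q^{m+j-1}r^2)(\ad x_1)^{m-1-j}(P_j)$, and there is an analogous formula for the $L_n$. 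A triangular argument then shows that every basis vector of $K_{m-1}$ indexed by some $j$ or $n$ with $q^{m+j-1}r^2\neq1$ lies in $d_1(K_m)$: its natural lift $(\ad x_1)^{m-j}(P_j)$ (resp. $(\ad x_1)^{m-n}(L_n)$) is killed by $\pi$ and maps onto it up to a nonzero scalar. By Lemmas~\ref{le:unique} and~\ref{le:unique2} there is at most one index $j_m<m$ in $\ndJ$ with $q^{m+j_m-1}r^2=1$, and it lies in $\ndJ_1$; hence the only basis vector that can fail to lift is $w_0=(\ad x_1)^{m-1-j_m}(P_{j_m})$.

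Everything therefore reduces to deciding whether $w_0\in d_1(K_m)$, and I claim this holds exactly when $m\in\ndJ_2$; granting this, the recursion gives $\dim K_m=1+\dim K_{m-1}$ when $m\in\ndJ$ and $\dim K_m=\dim K_{m-1}$ when $m\notin\ndJ$, which by Lemma~\ref{le:J} is precisely the asserted count. For the claim, if $m\in\ndJ_2$ then Lemma~\ref{le:existence} produces $L_m\in U_m'\cap\ker(\pi)\subseteq K_m$ with $-q_{21}^{-1}d_1(L_m)=w_0$, so $w_0\in d_1(K_m)$. Conversely, if $w_0=d_1(v)$ for some $v\in K_m$, decompose $v=v'+cP_m$ with $v'\in U_m'$; because $P_m\in\ker(\pi)$ in this case, $v'\in U_m'\cap\ker(\pi)$ and $d_1(v')=w_0=(\ad x_1)^{m-1-j_m}(P_{j_m})$, whence Lemma~\ref{le:411} forces $Q_2^{j_m,m-1-j_m}=0$, i.e. $m\in\ndJ_2$ by Lemma~\ref{le:Q2vanish}. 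I expect this final step — isolating the single obstruction vector $w_0$ and translating its liftability into the vanishing of $Q_2^{j_m,m-1-j_m}$ — to be the main difficulty; the decomposition $U_m=U_m'\oplus\fie P_m$ together with $P_m\in\ker(\pi)$ is exactly what allows Lemma~\ref{le:411}, stated only for $U_m'$, to be applied to a preimage that a priori lives in $U_m$.
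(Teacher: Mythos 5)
Your proposal is correct and shares the paper's top-level strategy (induction on $m$ to prove $\dim(\ker(\pi)\cap U_m)=|\ndJ\cap[0,m]|$, then Proposition~\ref{prop:independ}), but the inductive step runs on genuinely different machinery. The paper dichotomizes on whether $\dim(\ker(\pi)\cap U_m)$ equals or exceeds $\dim(\ker(\pi)\cap U_{m-1})$: the lower bound comes from injectivity of $\ad x_1$ (Lemma~\ref{le:adxinjective}); in the ``equal'' case $m\notin\ndJ$ is forced by Proposition~\ref{prop:independ} (a new $P_m$ or $L_m$ would contradict the equality); in the ``greater'' case rank--nullity for $d_1$ yields $P_m\in\ker(\pi)$ and surjectivity of $d_1$ on $\ker(\pi)\cap U_m$ simultaneously, after which Lemma~\ref{le:411} is applied to \emph{every} $j\in\ndJ_1\cap[0,m-1]$ and Lemma~\ref{le:J2}(1) or Lemma~\ref{le:Q2vanish} gives $m\in\ndJ_1$ or $m\in\ndJ_2$. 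You instead split on whether $P_m\in\ker(\pi)$ (Lemma~\ref{d(Pk)}) and compute $d_1(\ker(\pi)\cap U_m)$ explicitly: your $d_2$-correction trick ($v_0\mapsto v_0-tP_m$) proves surjectivity directly when $P_m\notin\ker(\pi)$, so inside the induction you need neither Lemma~\ref{le:adxinjective} nor Proposition~\ref{prop:independ}, and when $P_m\in\ker(\pi)$ you isolate the single obstruction $w_0$ and invoke Lemma~\ref{le:411} only for $j_m$. Your version is more constructive and pinpoints exactly where liftability can fail; the paper's is shorter because the dimension dichotomy delivers surjectivity abstractly.

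Two details need patching, both with the lemmas you already cite. First, the assertion that the lift of $(\ad x_1)^{m-1-n}(L_n)$ ``maps onto it up to a nonzero scalar'' is not literally true: by Lemma~\ref{le:d1ad},
\[
d_1\big((\ad x_1)^{m-n}(L_n)\big)=-q_{21}q^{m-n}(\ad x_1)^{m-1-j_n}(P_{j_n})+(m-n)_q(1-q^{m+n-1}r^2)(\ad x_1)^{m-1-n}(L_n),
\]
so your triangular argument needs $j_n\ne j_m$ for all $n\in\ndJ_2\cap[0,m-1]$. This does hold: $j_n=j_m$ would force $q^{m-n}=1$, hence $q=1$ and $r=-1$ since $(m)_q^!b_m\ne0$, and then $\ndJ_2\cap[0,m]=\emptyset$ by Lemma~\ref{le:unique}(2), a contradiction. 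Alternatively, observe that your dimension count only requires the equivalence ``$d_1(\ker(\pi)\cap U_m)=\ker(\pi)\cap U_{m-1}$ iff $w_0\in d_1(\ker(\pi)\cap U_m)$'', which holds even without this check, because adjoining $w_0$ to the span of the computed images already recovers all basis vectors. Second, Lemmas~\ref{le:existence} and~\ref{le:Q2vanish} are stated only for $p$ prime, so the case $p=0$ is formally outside your citations: either quote \cite[Theorem~4.16]{H2018root} for $p=0$ as the paper does, or note that in characteristic zero $\ndJ_2=\emptyset$ and the computation proving Lemma~\ref{le:Q2vanish} shows $Q_2^{j_m,m-1-j_m}\ne0$ (a nonzero integer times a nonzero product, as $b_m\ne0$), so $w_0$ never lifts and the recursion still matches $|\ndJ\cap[0,m]|$.
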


\begin{proof}
Assume first that $p=0$, then $\ndJ_2=\emptyset$ and the claim was proved in
\cite[Theorem~4.16]{H2018root}.

Assume now $p$ is a prime number.
It follows from Proposition \ref{prop:independ} that
it is enough to prove
\begin{align}
\label{eq:size}
\dim(\ker(\pi)\cap U_m)=|\ndJ\cap[0,m]|.
\end{align}
Again we proceed by induction on $m$. It is clear that $\ndJ_2=\emptyset$, $\ndJ=\ndJ_1$ for $m=0$. Since $P_0=u_0^2=0$ in $\NA(V)$ if and only if $1+s=0$, that is, $0\in\ndJ$. Thus the claim holds for $m=0$.

Suppose now $m>0$. Applying induction hypothesis and Proposition \ref{prop:independ}, we obtain that
the elements $(\ad x_1)^{m-1-j}(P_j), ~(\ad x_1)^{m-n-1}(L_n)$, for $j\in \ndJ_1\cap [0,m-1] ~\text{and}~ n\in \ndJ_2\cap[0,m-1]$ form a basis of $\ker(\pi)\cap U_{m-1}$.

Since $\ad x_1(\ker(\pi))\subseteq\ker(\pi)$, and $\ad x_1$ is injective by Lemma ~\ref{le:adxinjective} it follows that
$$\dim (\ker(\pi)\cap U_m)\ge \dim (\ker(\pi)\cap U_{m-1}).$$

Suppose first that $\dim (\ker(\pi)\cap U_m)=\dim (\ker(\pi)\cap U_{m-1}).$
Then the elements  $(\ad x_1)^{m-j}(P_j), ~(\ad x_1)^{m-n}(L_n)$, for $j\in \ndJ_1\cap [0,m-1] ~\text{and}$ $~n\in \ndJ_2\cap[0,m-1]$ form a basis of $\ker(\pi)\cap[0,m]$. And Proposition \ref{prop:independ} implies that $m\notin \ndJ$. Thus \eqref{eq:size} follows.

Assume now $\dim (\ker(\pi)\cap U_m)>\dim (\ker(\pi)\cap U_{m-1}).$  Since
$$d_1(\ker(\pi)\cap U_m)\subseteq \ker(\pi)\cap U_{m-1},$$
we obtain that $\ker(\pi)\cap U_m\cap \ker(d_1)\ne0$.
As $(m)_q^!b_m\ne0$, it follows that $\ker(d_1|U_m)=\fie P_m$. Hence
\begin{align*}
& \dim (\ker(\pi)\cap U_m)=1+\dim (\ker(\pi)\cap U_{m-1}),\\
 & q^{m(m-1)/2}(-r)^ms=-1.
\end{align*}
Therefore $d_1|\ker(\pi)\cap U_m$ is surjective. Thus for any $j\in\ndJ_1\cap[0,m-1]$, there exists $v_j\in \ker(\pi)\cap U_m$ such that
 $$d_1(v_j)=(\ad x_1)^{m-1-j}(P_j).$$
Further, we may assume that $v_j\in \ker(\pi)\cap U_m'$ for any $j\in \ndJ_1\cap [0,m-1]$.
Then $Q_2^{j,m-j-1}=0$ for any $j\in\ndJ_1\cap [0,m-1]$ by Lemma~\ref{le:411}. If
$q^{m+j-1}r^2\ne1$
 for any $j\in\ndJ_1\cap [0,m-1]$, then $m\in \ndJ_1$ follows from Lemma~\ref{le:J2}(1).
 If
 $q^{m+j-1}r^2=1$
 for some $j\in\ndJ_1\cap [0,m-1]$. Since $b_m\ne0$, Lemma~\ref{le:Q2vanish} implies that $m\in \ndJ_2$.
Thus Equation \eqref{eq:size} holds.
\end{proof}

\begin{rema}
If the $p=0$, then  Theorem \ref{theo:main} is exactly  Theorem
4.16 in \cite{H2018root}. It can happen that $\ndJ_2=\emptyset$, even if $p$ is prime number.
\end{rema}

\begin{rema}
 Theorem \ref{theo:main} generalizes Theorem $4.16$ in \cite{H2018root} to the field of arbitrary characteristic.
 As a consequence, we generalize Corollaries $4.17$, $4.18$ 4.22 and Proposition $4.20$ in \cite{H2018root} to arbitrary fields
\end{rema}

\begin{coro} \label{co:rootvectors}
 Let $k,l\in \ndN_0$
  with $k\ge l$. Suppose that $(k+l)_q^!b_{k+l}\ne 0$, and that
  $q^{k^2}r^ks=-1$ if $k=l$.
  Then the following are equivalent:
	\begin{enumerate}
		\item $[x_1^kx_2x_1^lx_2]$ is a root vector,
    \item $\big|\ndJ\cap [0,k+l]\big|\le l$.
	\end{enumerate}
\end{coro}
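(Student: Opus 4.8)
The plan is to translate the statement about the root vector $[x_1^kx_2x_1^lx_2]$ into a statement about the dimension of $\ker(\pi)\cap U_{k+l}$, for which Theorem~\ref{theo:main} gives exact control. By definition, $[x_1^kx_2x_1^lx_2]$ fails to be a root vector precisely when it is a linear combination of products of (higher) root vector candidates of the same degree $(k+l)\alpha_1+2\alpha_2$. I would first observe that in degree $m\alpha_1+2\alpha_2$ the relevant products involve the super-letters $u_i=[x_1^ix_2]$, so the space of such products is exactly $U_{k+l}$, and the elements $u_ku_l$ (up to scalar, the image of $[x_1^kx_2x_1^lx_2]$) sit inside it. The key reduction is therefore: $[x_1^kx_2x_1^lx_2]$ is a root vector if and only if its class in $\NA(V)$ is \emph{not} expressible using the strictly larger super-letters, which after passing to $\NA(V)=T(V)/\ker(\pi)$ becomes a rank/independence statement about $U_{k+l}$ modulo $\ker(\pi)$.

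**Using the basis from Theorem~\ref{theo:main} to count.**
Since $(k+l)_q^!b_{k+l}\ne0$ by hypothesis, Theorem~\ref{theo:main} tells us that $\dim\big(\ker(\pi)\cap U_{k+l}\big)=|\ndJ\cap[0,k+l]|$, and hence
$$\dim\big(\pi(U_{k+l})\big)=\dim U_{k+l}-|\ndJ\cap[0,k+l]|=(k+l+1)-|\ndJ\cap[0,k+l]|.$$
The images $\pi(u_iu_{k+l-i})$ for $0\le i\le k+l$ span $\pi(U_{k+l})$, and the ordering on super-letters makes $u_ku_l$ (with $k\ge l$) the critical generator: I would argue that $[x_1^kx_2x_1^lx_2]$ is a root vector iff its image cannot be written in terms of the images of the $u_iu_{k+l-i}$ with strictly larger leading super-letter, i.e.\ with $i>k$. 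Counting the number of independent such "higher" products and comparing with $\dim\pi(U_{k+l})$ is what converts the geometric condition into the numerical bound $|\ndJ\cap[0,k+l]|\le l$.

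**Executing the counting argument.**
Concretely, I expect the proof to run as follows. Among the $k+l+1$ generators $u_iu_{k+l-i}$, the condition $k\ge l$ singles out $u_ku_l$; the products with index $i$ in the range $l+1\le i\le k+l$ (equivalently those strictly dominating $u_ku_l$ in the super-letter order) number $k$, while $u_ku_l$ itself together with the range $0\le i\le l$ accounts for the rest. The element $[x_1^kx_2x_1^lx_2]$ is a root vector exactly when $\pi(u_ku_l)$ is linearly independent from the images of the dominating products, which happens iff the relations imposed by $\ker(\pi)$ (there are $|\ndJ\cap[0,k+l]|$ of them) are all "absorbed" among the dominating products, i.e.\ iff $|\ndJ\cap[0,k+l]|\le l$. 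Conversely, if $|\ndJ\cap[0,k+l]|>l$, at least one relation must involve $u_ku_l$ nontrivially, expressing it through higher terms and destroying its root-vector status. The case $k=l$ requires the extra hypothesis $q^{k^2}r^ks=-1$ to guarantee that $u_ku_l=u_k^2$ genuinely lies in the span of the relations when needed, which is why it is imposed separately.

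**The main obstacle.**
The delicate point, and where I expect the real work to lie, is making precise the claim that the $|\ndJ\cap[0,k+l]|$ relations can be arranged to be "triangular" with respect to the super-letter ordering, so that one can read off exactly when the top generator $u_ku_l$ is forced. The basis elements $(\ad x_1)^{m-j}(P_j)$ and $(\ad x_1)^{m-n}(L_n)$ from Theorem~\ref{theo:main} are not a priori in echelon form with respect to the $u_iu_{k+l-i}$, so I would need to analyze their leading terms—using the explicit expansions of $P_j$ (Definition~\ref{de:Pk}) and of the $\ad x_1$ action (Lemma~\ref{ad pi})—to control which indices $i$ appear. Establishing this leading-term structure, and handling the boundary case $k=l$ where the extra hypothesis enters, is the crux; the final numerical comparison $|\ndJ\cap[0,k+l]|\le l$ should then follow mechanically.
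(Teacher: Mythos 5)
Your proposal has the right first move (reduce to $\ker(\pi)\cap U_{k+l}$ and invoke Theorem~\ref{theo:main}), which is indeed how the paper proceeds (it defers the rest to the proof of \cite[Corollary~4.17]{H2018root}), but the combinatorial setup that follows is wrong in two ways, and the decisive step is missing. First, the span of the admissible products is misidentified: a product of root vector candidates all strictly larger than $w=x_1^kx_2x_1^lx_2$ in degree $(k+l)\alpha_1+2\alpha_2$ can only use factors $u_i=[x_1^ix_2]$ with $i<k$ and Lyndon super-letters $[x_1^{k'}x_2x_1^{l'}x_2]$ with $k'<k$, and these span exactly $W=\mathrm{span}\{u_iu_{k+l-i}: l+1\le i\le k-1\}$, of dimension $k-l-1$ --- not your set of size $k$ reaching up to index $k+l$ (for $i\ge k$ the word $x_1^ix_2$ is $\lex$-smaller than $w$, so $u_iu_{k+l-i}$ is not an allowed product). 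Second, $[x_1^kx_2x_1^lx_2]=u_ku_l-\chi(k\alpha_1{+}\alpha_2,l\alpha_1{+}\alpha_2)\,u_lu_k$ is not a scalar multiple of $u_ku_l$, and the $u_lu_k$-component cannot be dropped. A concrete counterexample to your criterion: take $k=1$, $l=0$, $s=-1$, $r\ne1$. Then $[x_1x_2x_2]=u_1u_0+q_{12}u_0u_1=\ad x_1(x_2^2)=\ad x_1(P_0)\in\ker(\pi)$, so it is not a root vector, consistent with $|\ndJ\cap[0,1]|=1>0=l$; but here $W=0$ and $\pi(u_1u_0)=-q_{12}\pi(u_0u_1)\ne0$, so the test ``$\pi(u_ku_l)$ independent of the dominating products'' would wrongly declare it a root vector. (Also, the hypothesis $q^{k^2}r^ks=-1$ for $k=l$ is needed so that $[x_1^kx_2x_1^kx_2]=[x_1^kx_2]^2$ is a root vector candidate at all, i.e.\ $2\in O_{k\alpha_1+\alpha_2}$, not for the reason you give.)

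The genuine mathematical content --- why the threshold is exactly $l$ --- is precisely what you postpone as ``the main obstacle,'' and no dimension count from your setup can produce it: with your numbers the natural bound would be $k$, and with the correct $W$ it would be $k-l-1$. One must show that $[w]\in W+\ker(\pi)\cap U_{k+l}$ if and only if $|\ndJ\cap[0,k+l]|\ge l+1$, i.e.\ analyze which prescribed components (at indices $i\le l$ and $i\ge k$) elements of $\ker(\pi)\cap U_{k+l}$ can have. This is done in \cite[Corollary~4.17]{H2018root} by expanding the basis elements $(\ad x_1)^{k+l-j}(P_j)$ (and here additionally $(\ad x_1)^{k+l-n}(L_n)$) in the elements $\su(k+l,i)$ of Definition~\ref{de:Skt} via Lemma~\ref{ad pi}, and exploiting that $\su(k+l,i)$ is triangular with respect to the $u_iu_{k+l-i}$ with nonvanishing extreme coefficients under the hypothesis $(k+l)_q^!b_{k+l}\ne0$. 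Since your proposal both miscounts the comparison space and leaves this triangularity argument entirely unexecuted, it does not constitute a proof.
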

\begin{proof}
This claim can be obtained by applying the similar approach in the proof of \cite[Corollary~4.17]{H2018root}.
\end{proof}

\begin{coro} \label{size}
Let $m\in \ndN_0$
	such that $(m)_q^!b_m\ne 0$.
Then the multiplicity of $m\alpha_1+2\alpha_2$ is
$$ m'-\big|\ndJ\cap[0,m]\big|,$$
where
\begin{align*}
  m'=\begin{cases} (m+1)/2 & \text{if $m$ is odd,}\\
  m/2 & \text{if $m$ is even and $q^{m^2/4}r^{m/2}s\ne -1$,}\\
  m/2+1 & \text{if $m$ is even and $q^{m^2/4}r^{m/2}s=-1$.}
\end{cases}
\end{align*}
\end{coro}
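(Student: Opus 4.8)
The plan is to compute the dimension of the homogeneous component $\NA(V)_{m\alpha_1+2\alpha_2}$ in two different ways and to compare the two expressions. For the first computation I would work inside the subspace $U_m$ of \eqref{eq:Un}. Each $u_i=[x_1^ix_2]$ has degree $i\alpha_1+\alpha_2$, so every $u_iu_{m-i}$ has degree $m\alpha_1+2\alpha_2$ and $\dim U_m=m+1$. The first task is to prove that $\pi$ maps $U_m$ onto all of $\NA(V)_{m\alpha_1+2\alpha_2}$. This follows from the PBW description once one observes that every Lyndon word of degree $m\alpha_1+2\alpha_2$ has the form $x_1^kx_2x_1^lx_2$ with $k>l$ and $k+l=m$, whose Shirshov decomposition is $(x_1^kx_2)(x_1^lx_2)$, so that the associated super-letter equals $u_ku_l-\chi(\deg u_k,\deg u_l)\,u_lu_k\in U_m$; the remaining spanning elements of the component, namely the products $u_au_b$ and the square $u_{m/2}^2$, already lie in $U_m$. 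Hence $\pi(U_m)=\NA(V)_{m\alpha_1+2\alpha_2}$, and by Theorem \ref{theo:main},
\begin{align*}
\dim \NA(V)_{m\alpha_1+2\alpha_2}=(m+1)-\dim\big(\ker(\pi)\cap U_m\big)=(m+1)-\big|\ndJ\cap[0,m]\big|.
\end{align*}

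For the second computation I would invoke Kharchenko's PBW theorem \cite{khar}. Since a PBW basis monomial of degree $m\alpha_1+2\alpha_2$ contains exactly two letters $x_2$, it is of exactly one of two kinds: a single root vector of degree $m\alpha_1+2\alpha_2$ (the corresponding word being either a two-$x_2$ Lyndon word or a power $v^2$ of a one-$x_2$ Lyndon word whose bracket is a root vector), or a product of two one-$x_2$ root vectors, that is, a product $u_au_b$ with $a+b=m$ and $a\ne b$, or the square $u_{m/2}^2$. By the definition of multiplicity in Section \ref{se:basic}, the number of monomials of the first kind is precisely the multiplicity of $m\alpha_1+2\alpha_2$, which I denote $\mu$; let $D$ be the number of monomials of the second kind. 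Then $\dim\NA(V)_{m\alpha_1+2\alpha_2}=\mu+D$.

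It remains to count $D$. The unordered pairs $\{a,b\}$ with $a+b=m$ and $a\ne b$ contribute $(m+1)/2$ monomials if $m$ is odd and $m/2$ if $m$ is even. For even $m$ the square $u_{m/2}^2$ is a genuine decomposable PBW monomial precisely when the height of the root vector $[x_1^{m/2}x_2]$ exceeds $2$, equivalently when $\chi(\beta,\beta)\ne-1$ for $\beta=(m/2)\alpha_1+\alpha_2$, that is, when $q^{m^2/4}r^{m/2}s\ne-1$; otherwise $u_{m/2}^2$ is either zero or itself a new root vector and is already accounted for in $\mu$. Here one uses that in characteristic $2$ the identity $-1=1$ makes this single scalar condition also cover the degenerate case $\chi(\beta,\beta)=1$. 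Reading off the definition of $m'$ one checks, case by case on the parity of $m$ and on the value of $q^{m^2/4}r^{m/2}s$, that $D=m+1-m'$.

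Combining the two computations yields $\mu+(m+1-m')=(m+1)-|\ndJ\cap[0,m]|$, hence $\mu=m'-|\ndJ\cap[0,m]|$, which is the asserted multiplicity. I expect the main obstacle to be the even-$m$ analysis of the square $u_{m/2}^2$: one must show that the scalar condition $q^{m^2/4}r^{m/2}s=-1$ uniformly distinguishes, across all characteristics, whether $u_{m/2}^2$ contributes to $D$ or to $\mu$, and confirm that this trichotomy is exactly the bookkeeping encoded in the three cases of $m'$. A secondary point requiring care is the surjectivity $\pi(U_m)=\NA(V)_{m\alpha_1+2\alpha_2}$, which should be derived from the PBW basis rather than taken for granted.
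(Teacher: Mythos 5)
Your argument has a genuine gap: both of your dimension computations rest on false statements, and the fact that the two errors cancel is precisely the content you would need to prove. The identification $\pi(U_m)=\NA(V)_{m\alpha_1+2\alpha_2}$ is not true in general, and correspondingly your dichotomy for PBW monomials of degree $m\alpha_1+2\alpha_2$ omits a third kind: monomials containing a power of $x_1$ as a factor, such as $[x_2]^2[x_1]^m$, $[x_2][x_1^ax_2][x_1]^{m-a}$ or $[x_1^kx_2x_1^lx_2][x_1]^c$ (a PBW factor may contain \emph{zero} letters $x_2$, which your parity count of the two letters $x_2$ ignores). Already for $m=1$ and generic braiding, $\NA(V)_{\alpha_1+2\alpha_2}$ is $3$-dimensional with PBW basis $[x_2]^2[x_1]$, $[x_2][x_1x_2]$, $[x_1x_2x_2]$, while $\dim\pi(U_1)\le\dim U_1=2$; your first computation yields $2$ and your second yields $\mu+D=1+1=2$, both missing $[x_2]^2[x_1]$. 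To repair the argument you would have to replace $\NA(V)_{m\alpha_1+2\alpha_2}$ by $\pi(U_m)$ throughout and then prove that the PBW monomials \emph{without} $x_1$-factors form a basis of $\pi(U_m)$. That statement is true but not automatic (a subspace need not be spanned by the basis monomials it happens to contain); proving it requires the straightening and leading-word analysis of \cite[Section~4]{H2018root}, which is exactly what is packaged in Corollary \ref{co:rootvectors}. A secondary weak point is your treatment of the square $u_{m/2}^2$ via heights: in positive characteristic, when $q^{m^2/4}r^{m/2}s=-1$ the height of $u_{m/2}$ lies in $\{2,2p,2p^2,\dots,\infty\}$, so the equivalence between ``height $>2$'' and ``$q^{m^2/4}r^{m/2}s\ne-1$'' that your trichotomy needs is not immediate either.

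The paper's proof takes a different and shorter route that avoids any dimension count of a full homogeneous component. By the definition in Section \ref{se:basic}, the multiplicity of $m\alpha_1+2\alpha_2$ is the number of root vectors of that degree, so one only has to count them. The root vector candidates of this degree are exactly $[x_1^{m-l}x_2x_1^lx_2]$ with $0\le l<m/2$, together with $[x_1^{m/2}x_2]^2$ when $m$ is even and $q^{m^2/4}r^{m/2}s=-1$; hence there are exactly $m'$ candidates. Corollary \ref{co:rootvectors}, applied with $k=m-l$, states that the candidate indexed by $l$ is a root vector if and only if $|\ndJ\cap[0,m]|\le l$, and Lemma \ref{le:upbound} guarantees $|\ndJ\cap[0,m]|\le m'$, so exactly the candidates with $l<|\ndJ\cap[0,m]|$ fail. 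This gives the formula $m'-\big|\ndJ\cap[0,m]\big|$ directly; Theorem \ref{theo:main} enters only through Corollary \ref{co:rootvectors}, which is the bridge between $\ker(\pi)\cap U_m$ and the PBW basis that your proposal implicitly assumes but does not supply.
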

\begin{proof}
One can prove the claim by using the similar discussions in the proof of \cite[Corollary~4.18]{H2018root}.
\end{proof}

\begin{table}[!hbp]
\begin{tabular}{c|l}
$m\alpha_1+2\alpha_2$ & non-root conditions \\
\hline
$\alpha_1+2\alpha_2$ &   $(1+s)(1-rs)=0$ \\
\hline
$2\alpha_1+2\alpha_2$ &$(1+s)(1-rs)(1+qr^2s)=0$  \\
\hline
$3\alpha_1+2\alpha_2$ & $s=-1,(3)_{-qr}=0$\\
 \hline
 $4\alpha_1+2\alpha_2$ & \tabincell{l}{ $s=-1$, $(3)_{-qr}=0$ or \\
                                        $s=-1$, $q^3r^2=-1$ or\\
                                        $rs=1$, $(3)_{-q^2r}=0$}\\
\hline
$6\alpha_1+2\alpha_2$ & $q=1,s=-1$, $(3)_{-r}=0$
\end{tabular}
\caption{Table for Corollary~\ref{co:non-roots}}
\label{ta:noroot}
\end{table}
\begin{prop} \label{pr:refl}
 Assume that $k,m\in \ndN_0$ with $m\ge k$
	such that $(k)_q^!b_k\ne 0$, and~$(k+1)_q(1-q^kr)=0$.
	Then the multiplicity of $m\alpha_1+2\alpha_2$ is
	the same as the multiplicity of $(2k-m)\alpha_1+2\alpha_2$ of $\NA (R_1(V))$.
\end{prop}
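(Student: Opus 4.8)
The plan is to obtain the equality of multiplicities not from the explicit formula of Corollary~\ref{size}, but from the invariance of root multiplicities under the reflection functor $R_1$; the two standing hypotheses are exactly what is needed to make $R_1$ available and to pin down the reflection. First I would identify the Cartan integer. Since $(k)_q^!=\prod_{j=1}^k(j)_q$ and $b_k=\prod_{j=0}^{k-1}(1-q^jr)$, the assumption $(k)_q^!b_k\ne0$ gives $(j+1)_q(1-q^jr)\ne0$ for every $0\le j<k$, whereas $(k+1)_q(1-q^kr)=0$ by hypothesis. Hence $k$ is the least $m\in\ndN_0$ with $(m+1)_q(1-q^mr)=0$, i.e. $k=-a_{12}$, where $a_{12}$ is the $(1,2)$-entry of the Cartan matrix of $(V,c)$. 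This is precisely the condition under which $R_1(V)$ is defined and is again a two-dimensional braided vector space of diagonal type, with associated reflection $s_1\in\mathrm{Aut}(\ndZ^2)$ given by $s_1(\alpha_1)=-\alpha_1$ and $s_1(\alpha_2)=\alpha_2-a_{12}\alpha_1=\alpha_2+k\alpha_1$.

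Next I would invoke the theory of Weyl groupoids and reflections for Nichols algebras of diagonal type, which rests on Kharchenko's PBW theorem \cite{khar} and the root systems of \cite{H2006} and is valid over a field of arbitrary characteristic: for every $\beta\in\ndZ^2$ the multiplicity of $\beta$ in $\NA(V)$ equals the multiplicity of $s_1(\beta)$ in $\NA(R_1(V))$. Applying this to $\beta=m\alpha_1+2\alpha_2$ and computing
\[
 s_1(m\alpha_1+2\alpha_2)=-m\alpha_1+2(\alpha_2+k\alpha_1)=(2k-m)\alpha_1+2\alpha_2
\]
yields the claim at once. Here $m\ge k$ serves only to present the result as the reflection of a root lying on the $m\ge k$ side of the hyperplane fixed by $s_1$ (the line through $k\alpha_1+2\alpha_2$) to its mirror image $(2k-m)\alpha_1+2\alpha_2$; the displayed identity itself holds for all $m$.

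The main obstacle is not this bookkeeping but the justification of the multiplicity-invariance step in positive characteristic; in characteristic zero it underlies \cite[Proposition~4.20]{H2018root}. To generalize it one must verify that the reflection functor $R_1$ and the induced bijection of Kharchenko PBW generators persist over $\fie$. Because Lyndon words, the super-letters $[w]$, and the PBW theorem are characteristic-free, I expect this to carry over; the point needing care is that in positive characteristic the set $O_\alpha$ of admissible powers is enlarged (compare the two cases in the definition of $O_\alpha$), so one must confirm that $s_1$ matches the root vector candidates of $\NA(V)$ with those of $\NA(R_1(V))$ compatibly with this enlargement. This should follow from the Weyl-invariance of the self-braiding $\chi(\beta,\beta)$ of a root $\beta$, which keeps $O_\beta$ in step with $O_{s_1(\beta)}$. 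As an independent check, one can recompute both sides from Corollary~\ref{size}: writing down the parameters of $R_1(V)$ and transporting the set $\ndJ$ by $s_1$, the explicit multiplicity formula should transform into itself, confirming the proposition without appealing to the general reflection theory.
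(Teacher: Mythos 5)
Your core argument is the one the paper intends. Note first that the paper states Proposition~\ref{pr:refl} \emph{without any proof}: it is presented as the arbitrary-characteristic version of \cite[Proposition~4.20]{H2018root}, and the argument there --- and the only viable one here --- is the reflection argument you sketch: the hypotheses $(k)_q^!b_k\ne 0$ and $(k+1)_q(1-q^kr)=0$ give $(\ad x_1)^{k}(x_2)\ne 0$ and $(\ad x_1)^{k+1}(x_2)=0$ in $\NA(V)$, so $k=-a_{12}$ and $R_1(V)$ is defined, and one concludes from invariance of root multiplicities under $s_1$ together with $s_1(m\alpha_1+2\alpha_2)=(2k-m)\alpha_1+2\alpha_2$. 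The problem is that the only step carrying real content in this paper's setting --- invariance of multiplicities under $R_1$ over a field of positive characteristic, with the refined notion of root vector built from the enlarged sets $O_\alpha$ --- is exactly the step you leave as ``I expect this to carry over.'' Neither of your citations delivers it: Kharchenko's PBW theorem \cite{khar} contains no reflection statement, and \cite{H2006} does not work with the characteristic-$p$ notion of root vector candidate used here. What closes the gap is the characteristic-free reflection theory, namely Heckenberger's Lusztig isomorphisms for Drinfeld doubles of bosonizations of Nichols algebras of diagonal type (J.~Algebra 323 (2010), 2130--2182), which requires only that $a_{12}$ be finite (not the existence of the full Weyl groupoid). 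Your observation that $\chi(\beta,\beta)$ is preserved, so that $O_\beta$ and $O_{s_1(\beta)}$ stay in step, is a necessary compatibility check but not a substitute for that machinery; note also that, as literally stated, your invariance claim fails for $\beta=\pm\alpha_1$ (multiplicity $1$ versus $0$ in the paper's convention), though this is harmless for the degrees at hand.

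Second, the ``independent check'' you propose is not available, and its failure is instructive. From $(k+1)_q(1-q^kr)=0$ one gets $(m)_q^!b_m=0$ for \emph{every} $m>k$, since $(k+1)_q$ is a factor of $(m)_q^!$ and $1-q^kr$ is a factor of $b_m$. Hence Corollary~\ref{size} is never applicable to $m\alpha_1+2\alpha_2$ with $m>k$ --- precisely the degrees where Proposition~\ref{pr:refl} says something nontrivial --- so no computation transporting the set $\ndJ$ can replace the structural reflection argument, and the proposition is not a formal consequence of Theorem~\ref{theo:main} or of the multiplicity formula.
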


\begin{coro} \label{co:non-roots}
Let $m\in \ndN_0$.
\begin{enumerate}
\item Assume that $m\in \{1,2,3,4,6\}$ and that $(m)_q^!b_m\ne 0$.
Then $m\alpha_1+2\alpha_2$ is not a root if and only if
$q,r,s$ satisfy the conditions given in Table~\ref{ta:noroot}.
\item Assume that $m=2k+1\ge 5$ is odd and that $(k+3)_q^!b_{k+3}\ne0$.
Then $m\alpha_1+2\alpha_2$ is a root of $\NA (V)$.
\item Assume that $m=2k\ge 8$ and that $(k+4)_q^!b_{k+4}\ne 0$.
Then $m\alpha_1+2\alpha_2$ is a root of $\NA (V)$.
\end{enumerate}
\end{coro}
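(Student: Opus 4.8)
The unifying principle is that $m\alpha_1+2\alpha_2$ is a root if and only if its multiplicity is strictly positive, and whenever $(m)_q^!b_m\ne0$ this multiplicity is given exactly by Corollary~\ref{size} as $m'-\big|\ndJ\cap[0,m]\big|$. Since a multiplicity is a nonnegative integer, $m\alpha_1+2\alpha_2$ \emph{fails} to be a root precisely when $\big|\ndJ\cap[0,m]\big|=m'$. For part (1) the hypothesis $(m)_q^!b_m\ne0$ is assumed, so I would argue entirely through this equality. By Lemma~\ref{le:upbound} any two distinct elements of $\ndJ$ differ by at least $3$, so the subsets of $[0,m]$ that can attain the value $m'$ are very few: for $m\le 2$ at most one element is available, for $m=3$ only $\{0,3\}$, for $m=4$ the extremal configurations are among $\{0,3\},\{0,4\},\{1,4\}$, and for $m=6$ only $\{0,3,6\}$. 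For each such configuration I would translate membership in $\ndJ$ into explicit equations in $q,r,s$ via Definition~\ref{def:J}, Example~\ref{ex:smallnumb} and Lemma~\ref{le:J}, reading off the rows of Table~\ref{ta:noroot}; the normalisation $(m)_q^!b_m\ne0$ is exactly what discards spurious solutions (e.g.\ it forces $qr\ne1$, so that $s=-1$ indeed keeps $m'$ at its smaller value and does not accidentally produce a root).

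For parts (2) and (3) only positivity of the multiplicity is needed, and here Lemma~\ref{le:upbound} does the work as long as $(m)_q^!b_m\ne0$. In that case $\big|\ndJ\cap[0,m]\big|\le\lfloor m/3\rfloor+1$, while $m'\ge\lceil m/2\rceil$, and the elementary comparison $\lceil m/2\rceil>\lfloor m/3\rfloor+1$ holds precisely for odd $m\ge5$ and for even $m\ge8$. Hence the multiplicity $m'-\big|\ndJ\cap[0,m]\big|$ is at least $1$ and $m\alpha_1+2\alpha_2$ is a root.

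The subtle point, and the place where the hypotheses $(k+3)_q^!b_{k+3}\ne0$ and $(k+4)_q^!b_{k+4}\ne0$ (rather than the stronger $(m)_q^!b_m\ne0$) are used, is the degenerate case $(m)_q^!b_m=0$. Here I would induct on $m$ and invoke the reflection Proposition~\ref{pr:refl}. Let $k'$ be the largest index with $(k')_q^!b_{k'}\ne0$; then $k'<m$, and since $(k'+1)_q^!b_{k'+1}=(k')_q^!b_{k'}(k'+1)_q(1-q^{k'}r)$ vanishes while its first factor does not, we get $(k'+1)_q(1-q^{k'}r)=0$. Thus all hypotheses of Proposition~\ref{pr:refl} hold, equating the multiplicity of $m\alpha_1+2\alpha_2$ in $\NA(V)$ with that of $(2k'-m)\alpha_1+2\alpha_2$ in $\NA(R_1(V))$. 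The numerical hypotheses are calibrated so that $m_1:=2k'-m$ lands back in the same regime: for part (2), $k'\ge k+3$ gives $m_1\ge5$ odd with $m_1<m$, and for part (3), $k'\ge k+4$ gives $m_1\ge8$ even with $m_1<m$. So $m\alpha_1+2\alpha_2$ is a root of $\NA(V)$ iff $m_1\alpha_1+2\alpha_2$ is a root of $\NA(R_1(V))$, which follows from the induction hypothesis.

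The hard part will be verifying that $R_1(V)$ still satisfies the corresponding non-degeneracy hypothesis, namely $(k_1+3)_{q'}^!b'_{k_1+3}\ne0$ (resp.\ $(k_1+4)_{q'}^!b'_{k_1+4}\ne0$), where $m_1=2k_1+1$ (resp.\ $2k_1$) and $q',r',s'$ are the parameters of $\NA(R_1(V))$. Since $R_1$ fixes the diagonal entry $q'=q$, the factorial factor survives because $k_1+3\le k+2$; the delicate point is controlling $b'_{k_1+3}$ through the explicit transformation of $r$ under $R_1$, as the reflected factors $1-q^ir'$ reindex the $1-q^jr$ and could a priori reach the vanishing region beyond $k'$. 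Settling this transport of the non-vanishing condition is the real obstacle in parts (2)--(3); on the computational side, the hardest single point is the $m=6$ row of Table~\ref{ta:noroot}, where the positive-characteristic conditions defining $\ndJ$ (through Lemma~\ref{le:J} applied to $\{0,3,6\}$) must be shown to force the extra relation $q=1$, which I would extract by combining the three membership equations to deduce $q^9=1$ and then eliminating the primitive cube-root and primitive ninth-root possibilities using $(6)_q^!\ne0$.
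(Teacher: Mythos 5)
Your strategy is the same as the paper's, which derives (1)--(3) from Corollary~\ref{size} and Lemma~\ref{le:upbound} following the proofs of \cite[Corollary~4.22]{H2018root}: counting via the multiplicity formula when $(m)_q^!b_m\ne0$, and reflecting via Proposition~\ref{pr:refl} when $(m)_q^!b_m=0$. But there is a genuine gap exactly where you flag ``the real obstacle'': you never prove that the reflected braiding satisfies the required non-vanishing hypothesis, and without that the reflection step proves nothing. This must be settled, and it can be done explicitly. Write $k'$ for the maximal index with $(k')_q^!b_{k'}\ne0$, so $k'\ge k+3\ge 5$ (resp.\ $k'\ge k+4$) and $(k'+1)_q(1-q^{k'}r)=0$. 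The reflected parameters are $q'=q$, $r'=q^{-2k'}r^{-1}$, $s'=q^{k'^2}r^{k'}s$, whence
\begin{align*}
1-q^ir'=-q^{i-2k'}r^{-1}\bigl(1-q^{2k'-i}r\bigr),
\end{align*}
so $b'_j\ne0$ if and only if $q^lr\ne1$ for all $l\in[2k'-j+1,2k']$. The dichotomy $(k'+1)_q(1-q^{k'}r)=0$ closes this for every $j\le k'$: if $q^{k'}r=1$, then $q\ne1$ (else $b_{k'}=0$) and $q^lr=q^{l-k'}\ne1$ because $1\le l-k'\le k'$ and $(k')_q^!\ne0$; if $(k'+1)_q=0$, then either $q=1$, in which case $q^lr=r\ne1$ by $b_{k'}\ne0$, or $q^{k'+1}=1$, in which case $q^lr=q^{l-k'-1}r\ne1$ since $0\le l-k'-1\le k'-1$ and $b_{k'}\ne0$. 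Once this is in place, your induction on $m$ becomes superfluous: since $m_1=2k'-m<k'$, the \emph{full} hypothesis $(m_1)_q^!b'_{m_1}\ne0$ holds, so Corollary~\ref{size} and Lemma~\ref{le:upbound} applied directly to $\NA(R_1(V))$ give positive multiplicity for $m_1\ge5$ odd (resp.\ $m_1\ge8$ even), and Proposition~\ref{pr:refl} transfers it back --- one reflection suffices, no recursion.

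A smaller correction in part (1): for the row $6\alpha_1+2\alpha_2$ you propose to rule out $q$ of order $9$ ``using $(6)_q^!\ne0$'', but that hypothesis does not exclude primitive ninth roots of unity. What forces $q^3=1$ are the two conditions \eqref{de:condi} for $6\in\ndJ$ relative to $n=0$ and $n=3$, namely $(3)_{q^5r^2}=0$ and $(3)_{-q^4r}=0$. Setting $x=-qr$, which satisfies $x^3=1$ because $3\in\ndJ$, these read $(3)_{q^3x^2}=0$ and $(3)_{q^3x}=0$; they give $q^9=1$ and, if $p\ne3$, additionally $q^3\ne x$ and $q^3\ne x^2$, hence $q^3=1$ (if $p=3$ they force $x=1$ and $q^3=1$ directly). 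Only after that does $(3)_q\ne0$, which indeed follows from $(6)_q^!\ne0$, yield $q=1$.
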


\begin{proof}
Due to Lemma ~\ref{le:upbound} and Corollary~\ref{size}, statements (1), (2), (3) can be obtained similarly to the proofs of statements (1), (2), (3) in
\cite[Corollary 4.22]{H2018root}, respectively.
\end{proof}

\end{document}